\def\R{\mathbb{R}}
\def\N{\mathbb{N}}
\newtheorem{nnassumption}{\bf Assumption}
\newtheorem{nntheorem}{\bf Theorem}
\newenvironment{theorem}{\begin{nntheorem}\it}{\end{nntheorem}}
\newtheorem{nncorollary}{\bf Corollary}
\newtheorem{nndefinition}{\bf Definition}
\newtheorem{nnproposition}{\bf Proposition}
\newtheorem{nnproblem}{\bf Problem}
\newtheorem{nnlemma}{\bf Lemma}
\newenvironment{lemma}{\begin{nnlemma}\it}{\end{nnlemma}}
\newtheorem{nnremark}{\bf Remark}
\newenvironment{remark}{\begin{nnremark} \rm }{\hfill \hspace*{1pt}\hfill $\circ$\end{nnremark}}
\newtheorem{nnexample}{\bf Example}
\newenvironment{proof}{{\bf Proof.}}{\hfill \hspace*{1pt}\hfill $\Box$}
\begin{document}
%
% paper title
% Titles are generally capitalized except for words such as a, an, and, as,
% at, but, by, for, in, nor, of, on, or, the, to and up, which are usually
% not capitalized unless they are the first or last word of the title.
% Linebreaks \\ can be used within to get better formatting as desired.
% Do not put math or special symbols in the title.
\title{Finite-dimensional observer-based PI regulation control of a reaction-diffusion equation}
%
%
% author names and IEEE memberships
% note positions of commas and nonbreaking spaces ( ~ ) LaTeX will not break
% a structure at a ~ so this keeps an author's name from being broken across
% two lines.
% use \thanks{} to gain access to the first footnote area
% a separate \thanks must be used for each paragraph as LaTeX2e's \thanks
% was not built to handle multiple paragraphs
%

\author{Hugo~Lhachemi and Christophe~Prieur% <-this % stops a space
\thanks{Hugo Lhachemi is with Universit{\'e} Paris-Saclay, CNRS, CentraleSup{\'e}lec, Laboratoire des signaux et syst{\`e}mes, 91190, Gif-sur-Yvette, France (e-mail: hugo.lhachemi@centralesupelec.fr). His work has been partially supported by ANR PIA funding: ANR-20-IDEES-0002.
\newline\indent Christophe Prieur is with Universit{\'e} Grenoble Alpes, CNRS, Grenoble-INP, GIPSA-lab, F-38000, Grenoble, France (e-mail: christophe.prieur@gipsa-lab.fr). His work has been partially supported by MIAI@Grenoble Alpes (ANR- 19-P3IA-0003)}
}

% note the % following the last \IEEEmembership and also \thanks -
% these prevent an unwanted space from occurring between the last author name
% and the end of the author line. i.e., if you had this:
%
% \author{....lastname \thanks{...} \thanks{...} }
%                     ^------------^------------^----Do not want these spaces!
%
% a space would be appended to the last name and could cause every name on that
% line to be shifted left slightly. This is one of those "LaTeX things". For
% instance, "\textbf{A} \textbf{B}" will typeset as "A B" not "AB". To get
% "AB" then you have to do: "\textbf{A}\textbf{B}"
% \thanks is no different in this regard, so shield the last } of each \thanks
% that ends a line with a % and do not let a space in before the next \thanks.
% Spaces after \IEEEmembership other than the last one are OK (and needed) as
% you are supposed to have spaces between the names. For what it is worth,
% this is a minor point as most people would not even notice if the said evil
% space somehow managed to creep in.

% The paper headers
%\markboth{Journal of \LaTeX\ Class Files,~Vol.~14, No.~8, August~2015}%
\markboth{Manuscript submitted to IEEE Transaction on Automatic Control}%
{Lhachemi \MakeLowercase{\textit{et al.}}}
% The only time the second header will appear is for the odd numbered pages
% after the title page when using the twoside option.
%
% *** Note that you probably will NOT want to include the author's ***
% *** name in the headers of peer review papers.                   ***
% You can use \ifCLASSOPTIONpeerreview for conditional compilation here if
% you desire.

% If you want to put a publisher's ID mark on the page you can do it like
% this:
%\IEEEpubid{0000--0000/00\$00.00~\copyright~2015 IEEE}
% Remember, if you use this you must call \IEEEpubidadjcol in the second
% column for its text to clear the IEEEpubid mark.

% use for special paper notices
%\IEEEspecialpapernotice{(Invited Paper)}

% make the title area
\maketitle

% As a general rule, do not put math, special symbols or citations
% in the abstract or keywords.
\begin{abstract}
This paper investigates the output feedback setpoint regulation control of a reaction-diffusion equation by means of boundary control. The considered reaction-diffusion plant may be open-loop unstable. The proposed control strategy consists of the coupling of a finite-dimensional observer and a PI controller in order to achieve the boundary setpoint regulation control of various system outputs such as the Dirichlet and Neumann traces. In this context, it is shown that the order of the finite-dimensional observer can always be selected large enough, with explicit criterion, to achieve both the stabilization of the plant and the setpoint regulation of the system output.
\end{abstract}

% Note that keywords are not normally used for peerreview papers.
\begin{IEEEkeywords}
Reaction-diffusion equation, finite-dimensional observer, output feedback, PI control, boundary regulation control, boundary measurement.
\end{IEEEkeywords}

% For peer review papers, you can put extra information on the cover
% page as needed:
% \ifCLASSOPTIONpeerreview
% \begin{center} \bfseries EDICS Category: 3-BBND \end{center}
% \fi
%
% For peerreview papers, this IEEEtran command inserts a page break and
% creates the second title. It will be ignored for other modes.
\IEEEpeerreviewmaketitle

\section{Introduction}\label{sec: Introduction}
The problem of controlling the output of a system so as to achieve asymptotic tracking of prescribed trajectories is one of the most fundamental problems in control theory. In the general context of finite-dimensional linear time-invariant (LTI) control systems, the problem of setpoint regulation control is very classical and has been widely investigated. One possible way to solve this problem is based on the augmentation of the state-space representation of the plant with an integral component of the tracking error and the use of the separation principle by exploiting separately a Luenberger observer (which allows the estimation of the state based on the measure only) and a stabilizing full-state feedback (see, e.g., \cite{hespanha2018linear}). Even if this approach has reached a very high level of maturity for finite-dimensional systems, its possible extension to infinite-dimensional systems, as those considered in this paper, is still an open problem.

Infinite-dimensional systems emerge in many practical applications due to the occurrence of delays, reaction-diffusion dynamics, or even flexible behavior (see, e.g., \cite{meurer2012control,morris2020,krstic2008boundary} for introductory textbooks on dedicated control theory for infinite-dimensional systems). While many efficient control design methods have been reported for the stabilization of distributed parameter systems, very few have been extended to the problem of output regulation. The main reason is that all techniques that have been developed for finite-dimensional LTI systems cannot be easily generalized to infinite-dimensional plants. For instance, the frequency domain approach has been generalized to the infinite-dimensional setting, but it requires to deal with an infinite number of poles, yielding an infinite-dimensional pole allocation problem. The state-space approach is followed in this work.

We propose, for the first time, an output feedback control design procedure to achieve the setpoint regulation control of reaction-diffusions system by means of a finite-dimensional observer coupled with a PI controller. The considered reaction-diffusion plant, which might be unstable, is modeled by a Sturm-Liouville operator as those classical introduced in the context of parabolic partial differential equation (PDE). The case of PI regulation of this system by means of a state feedback was reported in~\cite{lhachemi2020pi} (see also~\cite{pohjolainen1982robust,xu1995robust,dos2008boundary,trinh2017design,terrand2018regulation,barreau2019practical,terrand2019adding,coron2019pi,lhachemi2020pi,lhachemi2021pi} for various approaches about PI control design for different types of PDEs). Here we go beyond by designing an output feedback PI control strategy. Even if the proposed procedure also applies to bounded control inputs and bounded observations, we focus the presentation on boundary controls and boundary measurements. This is because these configurations are the most interesting for practical applications and also the most challenging since they involve unbounded control and observation operators (see, e.g., \cite{curtain2020introduction} for further explanations). We study several cases for the input-to-output map, covering Dirichlet control inputs (easily extendable to Neumann control inputs as discussed in conclusion) along with Dirichlet and/or Neumann to-be-regulated outputs and measured outputs. We also show that our procedure can be used to regulate a system output that is distinct of the measured one. Therefore, our approach gives a complete framework to study every associated input-to-output maps. 

The proposed control design strategy consists of an adequate integral component coupled with a finite-dimensional observer. The design of finite-dimensional observer-based controllers for distributed parameter plants is challenging due to the fact that the separation principle, that is classically used for finite-dimensional systems, does not apply for infinite-dimensional systems~\cite{sakawa1983feedback,curtain1982finite,balas1988finite,harkort2011finite}. Taking advantage of spectral reduction approaches~\cite{russell1978controllability,coron2004global} and using the control architecture initially reported in~\cite{sakawa1983feedback}, a LMI-based procedure for solving this stabilization problem for reaction-diffusion PDEs was reported in~\cite{katz2020constructive} in the case were the either control or observation operator is bounded. This approach was extended in~\cite{lhachemi2020finite} to the case were both control and observation operators are unbounded, including both Dirichlet and Neumann settings. The present work, taking advantage of~\cite{lhachemi2020finite}, goes beyond the simple problem of closed-loop stabilization by embracing the issue of output setpoint regulation control. Since the designed observer only estimates a finite number of modes of the infinite-dimensional system, there is an inherent mismatch between the actually measured system output and its estimation as soon as the output is to be regulated to a non-zero value. Hence, one of the main challenges is to account for this mismatch in the dynamics of the observer and then in the subsequent stability analysis. An other challenge is to couple this finite-dimensional observer with a suitable integral component, inspired by the one described in~\cite{lhachemi2020pi} for a state-feedback, in an output feedback setting. Our approach is based on Lyapunov direct methods and the main results take the form of explicit sufficient conditions ensuring the both stability and setpoint regulation control of the closed-loop plant. We assess that these conditions are always feasible provided the order of the observer is selected large enough. Therefore, we show in a constructive manner that the setpoint regulation control of reaction-diffusion PDEs can always be achieved by the coupling of a PI and a finite-dimensional observer.

The paper is organized by considering successively different input-output maps for the reaction-diffusion equation depending on the selected boundary measured output, the to-be-regulated output, and the control input. After recalling classical notations and properties for the Sturm-Liouville operators in Section~\ref{sec: preliminaries}, the case of a Dirichlet observation and a Dirichlet control input is considered in Section~\ref{sec: Dirichlet measurement and regulation control}. Then the case of a Neumann measurement and a Dirichlet control input is considered in Section~\ref{sec: Neuman}. While the to-be-regulated output and the measured output are the same in the two latter sections, a crossed configuration is considered in Section~\ref{sec: crossed configuration}. The regulation problem is solved for a Dirichlet measured output, a Neumann to-be-regulated output, and a Dirichlet control input. This final result completes the picture and gives a full study of the different cases for the input-to-output map of the considered class of distributed parameter systems. Some numerical simulations are given in Section \ref{sec: num} for this final result. Section~\ref{sec: conclusion} collects some concluding remarks.

\section{Notation and properties}\label{sec: preliminaries}

Spaces $\R^n$ are endowed with the Euclidean norm denoted by $\Vert\cdot\Vert$. The associated induced norms of matrices are also denoted by $\Vert\cdot\Vert$. Given two vectors $X$ and $Y$, $ \mathrm{col} (X,Y)$ denotes the vector $[X^\top,Y^\top]^\top$. $L^2(0,1)$ stands for the space of square integrable functions on $(0,1)$ and is endowed with the inner product $\langle f , g \rangle = \int_0^1 f(x) g(x) \,\mathrm{d}x$ with associated norm denoted by $\Vert \cdot \Vert_{L^2}$. For an integer $m \geq 1$, the Sobolev space of order $m$ is denoted by $H^m(0,1)$ and is endowed with its usual norm denoted by $\Vert \cdot \Vert_{H^m}$. For a symmetric matrix $P \in\R^{n \times n}$, $P \succeq 0$ (resp. $P \succ 0$) means that $P$ is positive semi-definite (resp. positive definite).

Let $p \in \mathcal{C}^1([0,1])$ and $q \in \mathcal{C}^0([0,1])$ with $p > 0$ and $q \geq 0$. Let the Sturm-Liouville operator $\mathcal{A} : D(\mathcal{A}) \subset L^2(0,1) \rightarrow L^2(0,1)$ be defined by $\mathcal{A}f = - (pf')' + q f$ on the domain $D(\mathcal{A}) \subset L^2(0,1)$ given by either $D(\mathcal{A}) = \{ f \in H^2(0,1) \,:\, f(0)=f(1)=0 \}$ or $D(\mathcal{A}) = \{ f \in H^2(0,1) \,:\, f'(0)=f(1)=0 \}$. The eigenvalues $\lambda_n$, $n \geq 1$, of $\mathcal{A}$ are simple, non negative, and form an increasing sequence with $\lambda_n \rightarrow + \infty$ as $n \rightarrow + \infty$. Moreover, the associated unit eigenvectors $\phi_n \in L^2(0,1)$ form a Hilbert basis. We also have $D(\mathcal{A}) = \{ f \in L^2(0,1) \,:\, \sum_{n\geq 1} \vert \lambda_n \vert ^2 \vert \left< f , \phi_n \right> \vert^2 \}$ and $\mathcal{A}f = \sum_{n \geq 1} \lambda_n \left< f , \phi_n \right> \phi_n$.

Let $p_*,p^*,q^* \in \R$ be such that $0 < p_* \leq p(x) \leq p^*$ and $0 \leq q(x) \leq q^*$ for all $x \in [0,1]$, then it holds~\cite{orlov2017general}:
\begin{equation}\label{eq: estimation lambda_n}
0 \leq \pi^2 (n-1)^2 p_* \leq \lambda_n \leq \pi^2 n^2 p^* + q^*
\end{equation}
for all $n \geq 1$. Assuming further than $p \in \mathcal{C}^2([0,1])$, we have for any $x \in \{0,1\}$ that $\phi_n (x) = O(1)$ and $\phi_n' (x) = O(\sqrt{\lambda_n})$ as $n \rightarrow + \infty$ \cite{orlov2017general}. Finally, one can check that, for all $f \in D(\mathcal{A})$,
\begin{align}
\sum_{n \geq 1} \lambda_n \left< f , \phi_n \right>^2
& = \left< \mathcal{A}f , f \right>
= \int_0^1 p (f')^2 + q f^2 \,\mathrm{d}x . \label{eq: inner product Af and f}
\end{align}
Moreover, for any $f \in D(\mathcal{A})$, we have $f(x) = \sum_{n \geq 1} \left< f , \phi_n \right> \phi_n(x)$ and $f'(x) = \sum_{n \geq 1} \left< f , \phi_n \right> \phi_n'(x)$ for all $x \in [0,1]$ (see, e.g., \cite{lhachemi2020finite}).

\section{Dirichlet measurement and regulation control}\label{sec: Dirichlet measurement and regulation control}

We consider the reaction-diffusion system with Dirichlet boundary observation described for $t > 0$ and $x \in (0,1)$ by
\begin{subequations}\label{eq: dirichlet boundary measurement - RD system}
\begin{align}
z_t(t,x) & = \left( p(x) z_x(t,x) \right)_x + (q_c - q(x)) z(t,x) \label{eq: dirichlet boundary measurement - RD system - 1} \\
z_x(t,0) & = 0 , \quad z(t,1) = u(t) \label{eq: dirichlet boundary measurement - RD system - 2} \\
z(0,x) & = z_0(x) \label{eq: dirichlet boundary measurement - RD system - 3} \\
y(t) & = z(t,0) \label{eq: dirichlet boundary measurement - RD system - 4}
\end{align}
\end{subequations}
in the case $p \in \mathcal{C}^2([0,1])$. Here $q_c \in\R$ is a constant, $u(t) \in\R$ is the command input, $y(t) \in\R$ is the measurement, $z_0 \in L^2(0,1)$ is the initial condition, and $z(t,\cdot) \in L^2(0,1)$ is the state. The control design objective is to design a finite-dimensional observer-based controller to achieve both stabilization and setpoint regulation control of $y(t)$ to some prescribed reference signal $r(t)$. By setpoint tracking, we mean that our objective is to ensure that $y(t) \rightarrow r_e$ when $t \rightarrow +\infty$ as soon as $r(t) \rightarrow r_e$ when $t \rightarrow +\infty$ for any arbitrarily given $r_e \in\R$.

\subsection{Spectral reduction}

As classically done in the context of boundary control systems (see \cite[Sec.~3.3]{curtain2020introduction} for details), we start by transforming the non-homogeneous problem (\ref{eq: dirichlet boundary measurement - RD system}) into an equivalent homogeneous one by introducing the change of variable:
\begin{equation}\label{eq: change of variable}
w(t,x) = z(t,x) - x^2 u(t) 
\end{equation}
that gives the equivalent representation:
\begin{subequations}\label{eq: homogeneous RD system}
\begin{align}
& w_t(t,x) = ( p(x) w_x(t,x) )_x + (q_c - q(x)) w(t,x) \nonumber \\
& \phantom{w_t(t,x) =}\; + a(x) u(t) + b(x) \dot{u}(t) \label{eq: homogeneous RD system - 1} \\
& w_x(t,0) = 0 , \quad w(t,1) = 0 \label{eq: homogeneous RD system - 2} \\
& w(0,x) = w_0(x) \label{eq: homogeneous RD system - 3} \\
& \tilde{y}(t) = w(t,0) \label{eq: homogeneous RD system - 4}
\end{align}
\end{subequations}
with $a,b \in L^2(0,1)$ defined by $a(x) = 2p(x) + 2xp'(x) + (q_c-q(x))x^2$ and $b(x) = -x^2$, respectively, $\tilde{y}(t) = y(t)$, and $w_0(x) = z_0(x) - x^2 u(0)$. Introducing the auxiliary command input $v(t) = \dot{u}(t)$, we infer that
\begin{subequations}\label{eq: homogeneous RD system - abstract form}
\begin{align}
\dot{u}(t) & = v(t) \label{eq: homogeneous RD system - abstract form - auxiliary input v} \\
\dfrac{\mathrm{d} w}{\mathrm{d} t}(t,\cdot) & = - \mathcal{A} w(t,\cdot) + q_c w(t,\cdot) + a u(t) + b v(t)
\end{align}
\end{subequations}
with $D(\mathcal{A}) = \{ f \in H^2(0,1) \,:\, f'(0)=f(1)=0 \}$. We introduce the coefficients of projection $w_n(t) = \left< w(t,\cdot) , \phi_n \right>$, $a_n = \left< a , \phi_n \right>$, and $b_n = \left< b , \phi_n \right>$. Considering classical solutions associated with any $z_0 \in H^2(0,1)$ and any $u(0) \in \R$ such that $z_0'(0)=0$ and $z_0(1) = u(0)$ (their existence for the upcoming closed-loop dynamics is an immediate consequence of \cite[Chap.~6, Thm.~1.7]{pazy2012semigroups}), we have $w(t,\cdot) \in D(\mathcal{A})$ for all $t \geq 0$ and we infer that
\begin{subequations}\label{eq: homogeneous RD system - spectral reduction}
\begin{align}
\dot{u}(t) & = v(t) \label{eq: homogeneous RD system - spectral reduction - 1} \\
\dot{w}_n(t) & = ( -\lambda_n + q_c ) w_n(t) + a_n u(t) + b_n v(t) \, , \quad n \geq 1 \label{eq: homogeneous RD system - spectral reduction - 2} \\
\tilde{y}(t) & = \sum_{i \geq 1} \phi_i(0) w_i(t) \label{eq: homogeneous RD system - spectral reduction - 3}
\end{align}
\end{subequations}

\subsection{Control design}

Let $\delta > 0$ be the desired exponential decay rate for the setpoint regulation. We fix $N_0 \geq 1$ so that $- \lambda_n + q_c < -\delta < 0$ for all $n \geq N_0 +1$. The integer $N_0$, which is definitely fixed for the rest of the control design procedure, can be interpreted as the number of modes that will be ``actively'' modified by the feedback. We now introduce an arbitrary integer $N \geq N_0 + 1$ which will be further constrained later. Inspired by~\cite{sakawa1983feedback}, we design as in~\cite{lhachemi2020finite} an observer to estimate the $N$ first modes of the plant while the state-feedback is performed on the $N_0$ first modes of the plant. In this framework, the estimation of the modes ranging from $N_0 + 1$ to $N$ will solely be used to improve the estimate of the system output (see (\ref{eq: dirichlet boundary measurement - observer dynamics - 1})).

Introducing $W^{N_0}(t) = \begin{bmatrix} w_{1}(t) & \ldots & w_{N_0}(t) \end{bmatrix}^\top$, $A_0 = \mathrm{diag}(- \lambda_{1} + q_c , \ldots , - \lambda_{N_0} + q_c)$, $B_{0,a} = \begin{bmatrix} a_1 & \ldots & a_{N_0} \end{bmatrix}^\top$, and $B_{0,b} = \begin{bmatrix} b_1 & \ldots & b_{N_0} \end{bmatrix}^\top$, we have
\begin{equation}\label{eq: W^N0 dynamics - 1}
\dot{W}^{N_0}(t) = A_0 W^{N_0}(t) + B_{0,a} u(t) + B_{0,b} v(t) .
\end{equation}
Our objective is to introduce an integral component to achieve the setpoint regulation control of the system output $y(t)$. To do so, we first consider the following classical integral component: $\dot{z}_i(t) = y(t) - r(t) = \sum_{n \geq 1} \phi_n(0) w_n(t) - r(t)$. This $z_i$-dynamics, which involves all the modes $w_n$ for $n \geq 1$, cannot be embedded into the reduced model (\ref{eq: W^N0 dynamics - 1}) that only involves the modes $w_n$ for $1 \leq n \leq N_0$. To circumvent this issue, we follow the idea developed in~\cite{lhachemi2020pi} by introducing $\xi_p(t) = z_i(t) - \sum_{n \geq N_0 +1} \frac{\phi_n(0)}{-\lambda_n +q_c} w_n(t)$ whose time derivative is given by $\dot{\xi}_p(t)
= \sum_{n = 1}^{N_0} \phi_n(0) w_n(t) + \alpha_0 u(t) + \beta_0 v(t) - r(t)$ with
\begin{align}\label{eq: def alpha_0 and beta_0}
\alpha_0 = - \sum_{n \geq N_0 + 1} \frac{a_n \phi_n(0)}{-\lambda_n + q_c} , \quad \beta_0 = - \sum_{n \geq N_0 + 1} \frac{b_n \phi_n(0)}{-\lambda_n + q_c} .
\end{align}
The main benefit is that the $\xi_p$-dynamics only involves the modes $w_n$ for $1 \leq n \leq N_0$ while achieving the same equilibrium condition than the $z_i$-dynamics. However, in this work and in sharp contrast with the state-feedback setting of~\cite{lhachemi2020pi}, the modes $w_n$ are not measured. Hence we need to replace them in the dynamics of the integral component by their estimated version $\hat{w}_n$ which will be described below. Hence, the employed integral component is described by:
\begin{align}\label{eq: dirichlet boundary measurement - integral component}
\dot{\xi}(t)
& = \sum_{n = 1}^{N_0} \phi_n(0) \hat{w}_n(t) + \alpha_0 u(t) + \beta_0 v(t) - r(t) .
\end{align}
We now define for $1 \leq n \leq N$ the observer dynamics:
\begin{align}
\dot{\hat{w}}_n (t)
& = ( -\lambda_n + q_c ) \hat{w}_n(t) + a_n u(t) + b_n v(t) \nonumber \\
& \phantom{=}\; - l_n \left( \sum_{i=1}^N \phi_i(0) \hat{w}_i(t) - \alpha_1 u(t) - \tilde{y}(t) \right) \label{eq: dirichlet boundary measurement - observer dynamics - 1}
\end{align}
with
\begin{equation}\label{eq: dirichlet boundary measurement - def alpha_1}
\alpha_1 = \sum_{n \geq N +1} \dfrac{a_n \phi_n(0)}{-\lambda_n + q_c}
\end{equation}
and where $l_n \in\R$ are the observer gains. We set $l_n = 0$ for $N_0+1 \leq n \leq N$. Compared to the stabilization problem studied in~\cite{lhachemi2020finite}, we introduce the additional term $-\alpha_1 u(t)$ in the observer dynamics (\ref{eq: dirichlet boundary measurement - observer dynamics - 1}). This term is added to compensate the inherent steady-state mismatch between the actually measured system output $\tilde{y}(t)$ and its estimation $\sum_{i=1}^N \phi_i(0) \hat{w}_i(t)$, obtained from the observer that estimates the only $N$ first modes of the plant, as soon as the output is to be regulated to a non-zero value. Note that this latter estimate of the output improves as the dimension of the observer $N$ increases.

We define for $1 \leq n \leq N$ the observation error as $e_n(t) = w_n(t) - \hat{w}_n(t)$. Hence we have
\begin{align}
& \dot{\hat{w}}_n (t) = ( -\lambda_n + q_c ) \hat{w}_n(t) + a_n u(t) + b_n v(t) + l_n \sum_{i=1}^{N_0} \phi_i(0) e_i(t) \nonumber \\
& + l_n \sum_{i=N_0+1}^{N} \dfrac{\phi_i(0)}{\sqrt{\lambda_i}} \tilde{e}_i(t) + l_n \alpha_1 u(t) + l_n \zeta(t) \label{eq: dirichlet boundary measurement - observer dynamics - 2}
\end{align}
with $\tilde{e}_n(t) = \sqrt{\lambda_n} e_n(t)$ and $\zeta(t) = \sum_{n \geq N+1} \phi_n(0) w_n(t)$. Hence, introducing $\hat{W}^{N_0}(t) = \begin{bmatrix} \hat{w}_{1}(t) & \ldots & \hat{w}_{N_0}(t) \end{bmatrix}^\top$, $E^{N_0}(t) = \begin{bmatrix} e_{1}(t) & \ldots & e_{N_0}(t) \end{bmatrix}^\top$, $\tilde{E}^{N-N_0}(t) = \begin{bmatrix} \tilde{e}_{N_0 +1} & \ldots & \tilde{e}_N \end{bmatrix}^\top$, $C_0 = \begin{bmatrix} \phi_1(0) & \ldots & \phi_{N_0}(0) \end{bmatrix}$, $C_1 = \begin{bmatrix} \frac{\phi_{N_0 +1}(0)}{\sqrt{\lambda_{N_0 + 1}}} & \ldots & \frac{\phi_{N}(0)}{\sqrt{\lambda_{N}}} \end{bmatrix}$, and $L = \begin{bmatrix} l_1 & \ldots & l_{N_0} \end{bmatrix}^\top$, we obtain that
\begin{align}
\dot{\hat{W}}^{N_0}(t) & = A_0 \hat{W}^{N_0}(t) + B_{0,a} u(t) + B_{0,b} v(t) + L C_0 E^{N_0}(t) \nonumber \\
& \phantom{=}\; + L C_1 \tilde{E}^{N-N_0}(t) + \alpha_1 L u(t) + L \zeta(t) \label{eq: hat_W^N0 dynamics - 2} .
\end{align}
With
\begin{equation}\label{eq: def hat W^N_0_a(t)}
\hat{W}^{N_0}_a(t) = \mathrm{col} (u(t),\hat{W}^{N_0}(t),\xi(t)) ,
\end{equation}
$\tilde{L} = \mathrm{col}(0,L,0)$, and defining
\begin{equation}\label{eq: def matrices A1 B1 and Br}
A_1 = \begin{bmatrix} 0 & 0 & 0 \\ B_{0,a} & A_0 & 0 \\ \alpha_0 & C_0 & 0 \end{bmatrix} , \quad
B_{1} = \begin{bmatrix} 1 \\ B_{0,b} \\ \beta_0 \end{bmatrix} , \quad
B_{r} = \begin{bmatrix} 0 \\ 0 \\ 1 \end{bmatrix} ,
\end{equation}
we deduce that
\begin{align}
\dot{\hat{W}}_a^{N_0}(t) & = A_1 \hat{W}_a^{N_0}(t) + B_{1} v(t) - B_{r} r(t) + \tilde{L} C_0 E^{N_0}(t) \nonumber \\
& \phantom{=}\; + \tilde{L} C_1 \tilde{E}^{N-N_0}(t) + \alpha_1 \tilde{L} u(t) + \tilde{L} \zeta(t) . \label{eq: hat_W_a^N0 dynamics - 1}
\end{align}
Setting the auxiliary command input as
\begin{equation}\label{eq: v - state feedback}
v(t) = K \hat{W}_a^{N_0}(t) ,
\end{equation}
and defining
\begin{equation}\label{eq: def Acl}
A_\mathrm{cl}(\alpha_1) = A_1 + B_1 K + \alpha_1 \tilde{L} \begin{bmatrix} 1 & 0 & 0 \end{bmatrix} ,
\end{equation}
we obtain that
\begin{align}
\dot{\hat{W}}_a^{N_0}(t) & = A_\mathrm{cl}(\alpha_1) \hat{W}_a^{N_0}(t) - B_{r} r(t) \nonumber \\
& \phantom{=}\; + \tilde{L} C_0 E^{N_0}(t) + \tilde{L} C_1 \tilde{E}^{N-N_0}(t) + \tilde{L} \zeta(t) \label{eq: hat_W_a^N0 dynamics - 2}
\end{align}
and, from (\ref{eq: W^N0 dynamics - 1}) and (\ref{eq: hat_W^N0 dynamics - 2}),
\begin{align}
\dot{E}^{N_0}(t) & = ( A_0 - L C_0 ) E^{N_0}(t) - L C_1 \tilde{E}^{N-N_0}(t) \nonumber \\
& \phantom{=}\; - \alpha_1 L \begin{bmatrix} 1 & 0 & 0 \end{bmatrix} \hat{W}^{N_0}_{a} - L \zeta(t) . \label{eq: E^N0 dynamics - 2}
\end{align}
We now define $\hat{W}^{N-N_0}(t) = \begin{bmatrix} \hat{w}_{N_0 + 1}(t) & \ldots & \hat{w}_{N}(t) \end{bmatrix}^\top$, $A_2 = \mathrm{diag}(- \lambda_{N_0 + 1} + q_c , \ldots , - \lambda_{N} + q_c)$, $B_{2,a} = \begin{bmatrix} a_{N_0 + 1} & \ldots & a_{N} \end{bmatrix}^\top$, $B_{2,b} = \begin{bmatrix} b_{N_0 + 1} & \ldots & b_{N} \end{bmatrix}^\top$. We obtain from (\ref{eq: dirichlet boundary measurement - observer dynamics - 1}) with $l_n = 0$ for $N_0 +1 \leq n \leq N$ that
\begin{align}
& \dot{\hat{W}}^{N-N_0}(t)
= A_2 \hat{W}^{N-N_0}(t) + B_{2,a} u(t) + B_{2,b} v(t) \nonumber \\
& = A_2 \hat{W}^{N-N_0}(t) + \left( B_{2,b} K + \begin{bmatrix} B_{2,a} & 0 & 0 \end{bmatrix} \right) \hat{W}_a^{N_0}(t) \label{eq: hat_W^N-N0 dynamics}
\end{align}
and, using (\ref{eq: homogeneous RD system - spectral reduction - 2}) and (\ref{eq: dirichlet boundary measurement - observer dynamics - 1}),
\begin{equation}\label{eq: E^N-N0 dynamics}
\dot{\tilde{E}}^{N-N_0}(t) = A_2 \tilde{E}^{N-N_0}(t) .
\end{equation}
Putting now together (\ref{eq: hat_W_a^N0 dynamics - 2}-\ref{eq: E^N-N0 dynamics}) while introducing
\begin{equation}\label{eq: def X}
X(t) = \mathrm{col} \left( \hat{W}_a^{N_0}(t) , E^{N_0}(t) , \hat{W}^{N-N_0}(t) , \tilde{E}^{N-N_0}(t) \right) ,
\end{equation}
we obtain that
\begin{equation}\label{eq: dynamics closed-loop system - finite dimensional part}
\dot{X}(t) = F X(t) + \mathcal{L} \zeta(t) - \mathcal{L}_r r(t)
\end{equation}
where
\begin{equation*}
F = \begin{bmatrix}
A_\mathrm{cl}(\alpha_1) & \tilde{L} C_0 & 0 & \tilde{L} C_1 \\
-\alpha_1 L \begin{bmatrix} 1 & 0 & 0 \end{bmatrix} & A_0 - L C_0 & 0 & -L C_1 \\
B_{2,b} K + \begin{bmatrix} B_{2,a} & 0 & 0 \end{bmatrix} & 0 & A_2 & 0 \\
0 & 0 & 0 & A_2
\end{bmatrix} ,
\end{equation*}
\begin{equation*}
\mathcal{L} = \mathrm{col} ( \tilde{L} , - L , 0 , 0 ) , \quad
\mathcal{L}_r = \mathrm{col} ( B_r , 0 , 0 , 0 ) .
\end{equation*}

Defining $E = \begin{bmatrix} 1 & 0 & \ldots & 0\end{bmatrix}$ and $\tilde{K} = \begin{bmatrix} K & 0 & 0 & 0\end{bmatrix}$, we obtain from (\ref{eq: def hat W^N_0_a(t)}), (\ref{eq: v - state feedback}), and (\ref{eq: def X}) that
\begin{equation}\label{eq: u and v in function of X}
u(t) = E X(t) , \quad v(t) = \tilde{K} X(t)
\end{equation}
and we can introduce
\begin{equation}\label{eq: matrix G}
G = \Vert a \Vert_{L^2}^2 E^\top E + \Vert b \Vert_{L^2}^2 \tilde{K}^\top \tilde{K}  \preceq g I
\end{equation}
with $g = \Vert a \Vert_{L^2}^2 + \Vert b \Vert_{L^2}^2 \Vert K \Vert^2$ a constant independent of $N$.

\begin{lemma}\label{eq: Dirichlet measurement - Kalman condion}
$(A_1,B_1)$ is controllable and $(A_0,C_0)$ is observable.
\end{lemma}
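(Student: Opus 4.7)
I would prove both claims via the Popov--Belevitch--Hautus (PBH) test, combined with uniqueness of solutions to second-order ODEs applied to the eigenfunctions $\phi_n$. For $(A_0,C_0)$ observability, $A_0 = \mathrm{diag}(-\lambda_1+q_c,\dots,-\lambda_{N_0}+q_c)$ has simple eigenvalues, so PBH reduces to $\phi_n(0) \neq 0$ for $1 \le n \le N_0$. This follows from the boundary condition $\phi_n'(0) = 0$ built into $D(\mathcal{A})$: if $\phi_n(0)$ vanished as well, ODE uniqueness applied to $-(p\phi_n')' + q\phi_n = \lambda_n \phi_n$ would force $\phi_n \equiv 0$, contradicting $\|\phi_n\|_{L^2}=1$.

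Controllability of $(A_1,B_1)$ requires more work. My first step is to establish by integration by parts the key identity
$$a_n + (-\lambda_n + q_c)\, b_n = -p(1)\phi_n'(1), \qquad n \ge 1,$$
using the explicit forms $a(x) = 2p + 2xp' + (q_c-q)x^2$ and $b(x) = -x^2$, together with the boundary conditions of $\phi_n$. The right-hand side is nonzero, since $\phi_n(1) = \phi_n'(1) = 0$ would force $\phi_n \equiv 0$. Because $A_1$ is block lower triangular with diagonal blocks $0, A_0, 0$, its spectrum is $\{0\} \cup \{-\lambda_k + q_c : 1 \le k \le N_0\}$. I would then check PBH at each eigenvalue by analyzing left eigenvectors $[x, y^\top, z]$ of $A_1$, i.e., solving $y^\top B_{0,a} + z\alpha_0 = \lambda x$, $y^\top A_0 + z C_0 = \lambda y^\top$, $0 = \lambda z$. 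At $\lambda = -\lambda_k + q_c \neq 0$, the unique (up to scalar) solution is proportional to $[a_k/(-\lambda_k + q_c),\, e_k^\top,\, 0]$; its inner product with $B_1$ equals $[a_k + (-\lambda_k + q_c) b_k]/(-\lambda_k + q_c) = -p(1)\phi_k'(1)/(-\lambda_k + q_c) \neq 0$, confirming PBH.

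The main obstacle is the PBH check at $\lambda = 0$. The case where $q_c = \lambda_{k_0}$ for some $k_0 \le N_0$ collapses quickly: the $k_0$-th scalar equation of $y^\top A_0 + z C_0 = 0$ immediately yields $z = 0$, and the specialization $a_{k_0} = -p(1)\phi_{k_0}'(1) \neq 0$ of the key identity then forces $y = 0$. In the generic case where $A_0$ is invertible, $y^\top = -z C_0 A_0^{-1}$ and the first equation reduces to $z(\alpha_0 - C_0 A_0^{-1} B_{0,a}) = 0$; a spurious left eigenvector with $z \neq 0$ (orthogonal to $B_1$ for a suitable choice of the free $x$) would exist if and only if $\alpha_0 = C_0 A_0^{-1} B_{0,a}$. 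Unpacking the definition of $\alpha_0$, this amounts to showing
$$\sum_{n \ge 1} \frac{a_n \phi_n(0)}{-\lambda_n + q_c} \neq 0.$$
To establish this, I would identify the sum with $g(0)$, where $g := (q_c I - \mathcal{A})^{-1} a \in D(\mathcal{A})$, using the pointwise series expansion for elements of $D(\mathcal{A})$ recorded in Section~\ref{sec: preliminaries}. Since $a = (q_c I - \mathcal{A})(x^2)$ \emph{as a differential expression}, the auxiliary function $h(x) := x^2 - g(x)$ satisfies the homogeneous ODE $(p h')' + (q_c - q) h = 0$ together with $h'(0) = 0$ (from $g'(0) = 0$) and $h(1) = 1$ (from $g(1) = 0$). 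Since $h \not\equiv 0$, ODE uniqueness gives $h(0) \neq 0$, whence $g(0) = -h(0) \neq 0$, closing the last PBH check.
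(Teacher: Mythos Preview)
Your proof is correct, but it follows a genuinely different route from the paper's. The paper does not run PBH directly on $(A_1,B_1)$; instead it invokes an external structural lemma (\cite[Lem.~2]{lhachemi2020pi}) that reduces controllability to (i) a Kalman condition already established in \cite{lhachemi2020finite} and (ii) invertibility of the matrix
\[
T = \begin{bmatrix} 0 & 0 & 1 \\ B_{0,a} & A_0 & B_{0,b} \\ \alpha_0 & C_0 & \beta_0 \end{bmatrix}.
\]
To show $\ker(T)=\{0\}$, the paper takes an element $(u_e,w_{1,e},\dots,w_{N_0,e},v_e)$ of the kernel, extends it to all modes by $w_{n,e}=-a_n u_e/(-\lambda_n+q_c)$ for $n\ge N_0+1$, builds $w_e=\sum_n w_{n,e}\phi_n\in D(\mathcal{A})$, and then performs the change of variable $z_e=w_e+x^2 u_e$ to obtain an ODE with $z_e(0)=z_e'(0)=0$, concluding $z_e\equiv 0$ by Cauchy uniqueness. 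Your argument is more self-contained: you treat each eigenvalue of $A_1$ separately, and the non-zero eigenvalues are dispatched via the integration-by-parts identity $a_n+(-\lambda_n+q_c)b_n=-p(1)\phi_n'(1)$, which the paper never isolates (that part of the argument is hidden inside the cited Kalman condition). At $\lambda=0$ your resolvent function $g=(q_cI-\mathcal{A})^{-1}a$ and auxiliary $h=x^2-g$ play exactly the roles of $w_e/u_e$ and $z_e/u_e$ in the paper's proof, so the decisive ODE-uniqueness step is the same in spirit. What your approach buys is independence from the two cited lemmas and an explicit, reusable identity for the non-zero eigenvalue case; what the paper's approach buys is a single unified kernel computation rather than a case split over the spectrum. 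The observability argument for $(A_0,C_0)$ is identical in both.
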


\begin{proof}
From~\cite[Lem.~2]{lhachemi2020pi}, $(A_1,B_1)$ is controllable if and only if
\begin{equation}\label{eq: kalman condition previous work}
\left(\begin{bmatrix} 0 & 0 \\ B_{0,a} & A_0 \end{bmatrix} , \begin{bmatrix} 1 \\ B_{0,b} \end{bmatrix} \right)
\end{equation}
satisfies the Kalman condition and the matrix
$
T = \begin{bmatrix} 0 & 0 & 1 \\ B_{0,a} & A_0 & B_{0,b} \\ \alpha_0 & C_0 & \beta_0 \end{bmatrix}
$
is invertible. The former condition was assessed in~\cite{lhachemi2020finite}. Hence we focus on the latter one. Let $\begin{bmatrix} u_e & w_{1,e} & \ldots & w_{N_0,e} & v_e \end{bmatrix}^\top \in\mathrm{ker}(T)$. We obtain that
\begin{subequations}\label{eq: check Kalman condition}
\begin{align}
v_e & = 0 , \label{eq: check Kalman condition - 1} \\
a_n u_e + (-\lambda_n +q_c) w_{n,e} & = 0 , \quad 1 \leq n \leq N_0 , \label{eq: check Kalman condition - 2} \\
\alpha_0 u_e + \sum_{n=1}^{N_0} \phi_n(0) w_{n,e} & = 0 . \label{eq: check Kalman condition - 3}
\end{align}
\end{subequations}
Defining for $n \geq N_0 +1$ the quantity $w_{n,e} = -\frac{a_n}{-\lambda_n+q_c}u_e$, we have $(-\lambda_n+q_c)w_{n,e} + a_n u_e = 0$ for all $n \geq 1$. Hence $(w_{n,e})_{n \geq 1} , (\lambda_n w_{n,e})_{n \geq 1} \in l^2(\N)$ ensuring that $w_e \triangleq \sum_{n \geq 1} w_{n,e} \phi_n \in D(\mathcal{A})$ and $\mathcal{A} w_e = \sum_{n \geq 1} \lambda_n w_{n,e} \phi_n$. This shows that $- \mathcal{A} w_e + q_c w_e + a u_e = 0$. Moreover, from (\ref{eq: check Kalman condition - 3}) and using (\ref{eq: def alpha_0 and beta_0}), we infer that $w_e(0) = 0$. From the two last identities, we have that $(p w_e')' + (q_c - q) w_e + a u_e = 0$, $w_e(0) = w_e'(0) = 0$, and $w_e(1) = 0$. Introducing the change of variable $z_e(x) = w_e(x) + x^2 u_e$, we deduce that $(p z_e')' + (q_c - q) z_e = 0$, $z_e(0) = z_e'(0) = 0$, and $z_e(1) = u_e$. By Cauchy uniqueness, we infer that $z_e = 0$ hence $u_e = z_e(1) = 0$. Thus we have $w_e = z_e - x^2 u_e = 0$ hence $w_{n,e} = 0$ for all $n \geq 1$. We deduce that $\mathrm{ker}(T)=\{0\}$. Overall, we have shown that $(A_1,B_1)$ is controllable. Finally, the pair $(A_0,C_0)$ is observable because 1) $A_0$ is diagonal with simple eigenvalues, 2) by Cauchy uniqueness, $\phi_n(0) \neq 0$ for all $n \geq 1$.
\end{proof}

In the sequel we select, once for all and independely of the dimension $N$ of the observer, the gains $K \in\R^{1 \times (N_0 +2)}$ and $L \in\R^{N_0}$ so that $A_1 + B_1 K$ and $A_0 - L C_0$ are Hurwitz with eigenvalues that have a real part strictly less than $- \delta < 0$.

\subsection{Equilibirum condition and dynamics of deviations}

We aim at characterizing the equilibrium condition of the closed-loop system composed of the reaction-diffusion system (\ref{eq: dirichlet boundary measurement - RD system}), the auxiliary command input dynamics (\ref{eq: homogeneous RD system - abstract form - auxiliary input v}), the integral action (\ref{eq: dirichlet boundary measurement - integral component}), the observer dynamics (\ref{eq: dirichlet boundary measurement - observer dynamics - 1}), and the state-feedback (\ref{eq: v - state feedback}). To do so let $r(t) = r_e \in\R$ be arbitrary. We must solve the system of equations:
\begin{subequations}\label{eq: dirichlet boundary measurement - equilibirum condition}
\begin{align}
0 & = (-\lambda_n+q_c) w_{n,e} + a_n u_e + b_n v_e = 0 , \quad n \geq 1 , \label{eq: dirichlet boundary measurement - equilibirum condition - 1} \\
0 & = v_e = K \hat{W}^{N_0}_{a,e} , \label{eq: dirichlet boundary measurement - equilibirum condition - 2} \\
0 & = \sum_{n=1}^{N_0} \phi_n(0) \hat{w}_{n,e} + \alpha_0 u_e + \beta_0 v_e - r_e , \label{eq: dirichlet boundary measurement - equilibirum condition - 3} \\
0 & = (-\lambda_n+q_c) \hat{w}_{n,e} + a_n u_e + b_n v_e \nonumber \\
& \phantom{=}\; - l_n \left\{ \sum_{i=1}^{N} \phi_i(0) \hat{w}_{i,e} - \alpha_1 u_e - \tilde{y}_e \right\} , \quad 1 \leq n \leq N_0 , \label{eq: dirichlet boundary measurement - equilibirum condition - 4} \\
0 & = (-\lambda_n+q_c) \hat{w}_{n,e} + a_n u_e + b_n v_e , \quad N_0 + 1 \leq n \leq N , \label{eq: dirichlet boundary measurement - equilibirum condition - 5} \\
\tilde{y}_e & = \sum_{n \geq 1} \phi_n(0) w_{n,e} . \label{eq: dirichlet boundary measurement - equilibirum condition - 6}
\end{align}
\end{subequations}
We first note from (\ref{eq: dirichlet boundary measurement - equilibirum condition - 2}) that $v_e = 0$. Then, from (\ref{eq: dirichlet boundary measurement - equilibirum condition - 1}) we have $w_{n,e} = -\frac{a_n}{-\lambda_n +q_c}u_e$ for all $n \geq N_0 +1$. In particular, from (\ref{eq: dirichlet boundary measurement - equilibirum condition - 5}), we have $\hat{w}_{n,e} = w_{n,e} = -\frac{a_n}{-\lambda_n +q_c}u_e$ for all $N_0 + 1 \leq n \leq N$. Defining $e_{n,e} = w_{n,e} - \hat{w}_{n,e}$ and $\zeta_e = \sum_{n \geq N+1} \phi_n(0) w_{n,e}$, we obtain that $e_{n,e} = 0$ for all $N_0 + 1 \leq n \leq N$. Hence, from (\ref{eq: dirichlet boundary measurement - equilibirum condition - 4}), we infer that $0 = (-\lambda_n + q_c) \hat{w}_{n,e} + a_n u_e + l_n \sum_{i = 1}^{N_0} \phi_i(0) e_{i,e} + l_n \alpha_1 u_e + l_n \zeta_e$ for all $1 \leq n \leq N_0$. Combining this latter identity with (\ref{eq: dirichlet boundary measurement - equilibirum condition - 1}), we obtain that $(A_0-LC_0)E^{N_0}_e - L \alpha_1 u_e - L \zeta_e = 0$. Invoking (\ref{eq: dirichlet boundary measurement - def alpha_1}), we note that $\alpha_1 u_e = - \sum_{n \geq N+1} \phi_n(0) w_{n,e} = - \zeta_e$, implying that $(A_0-LC_0)E^{N_0}_e = 0$. Since $A_0-LC_0$ is Hurwitz, we infer that $e_{n,e} = 0$ for all $1 \leq n \leq N_0$. In particular, $\hat{w}_{n,e} = w_{n,e}$ for all $1 \leq n \leq N$. From (\ref{eq: dirichlet boundary measurement - equilibirum condition - 2}-\ref{eq: dirichlet boundary measurement - equilibirum condition - 4}) we deduce that $0 = A_\mathrm{cl}(\alpha_1) \hat{W}_{a,e}^{N_0} - B_r r_e + \tilde{L}\zeta_e$. Recalling that $\zeta_e = - \alpha_1 u_e$ and $A_\mathrm{cl}(\alpha_1)$ is defined by (\ref{eq: def Acl}), we obtain that $(A_1+B_1 K) \hat{W}_{a,e}^{N_0} = B_r r_e$. Since $A_1+B_1 K$ is Hurwitz, we infer that $\hat{W}_{a,e}^{N_0} = \begin{bmatrix} u_e & \hat{w}_{1,e} & \ldots & \hat{w}_{N_0,e} & \xi_e \end{bmatrix}^\top = (A_1+B_1 K)^{-1} B_r r_e$. This is in particular compatible with (\ref{eq: dirichlet boundary measurement - equilibirum condition - 2}) since, based on (\ref{eq: def matrices A1 B1 and Br}), we indeed obtain that $K \hat{W}^{N_0}_{a,e} = 0$. We note that $(w_{n,e})_{n \geq 1} , (\lambda_n w_{n,e})_{n \geq 1} \in l^2(\N)$ ensuring that $w_e \triangleq \sum_{n \geq 1} w_{n,e} \phi_n \in D(\mathcal{A})$ and $\mathcal{A} w_e = \sum_{n \geq 1} \lambda_n w_{n,e} \phi_n$. Using (\ref{eq: dirichlet boundary measurement - equilibirum condition - 1}), we obtain that $- \mathcal{A} w_e + q_c w_e + a u_e + b v_e = 0$. Introducing the change of variable $z_e = w_e + x^2 u_e$, $z_e$ is a static solution of (\ref{eq: dirichlet boundary measurement - RD system - 1}-\ref{eq: dirichlet boundary measurement - RD system - 2}) associated with the constant control input $u(t) = u_e$. Denoting by $y_e \triangleq z_e(0) = w_e(0) = \tilde{y}_e$, we infer from (\ref{eq: dirichlet boundary measurement - equilibirum condition - 3}) while invoking (\ref{eq: def alpha_0 and beta_0}) that
$r_e = \sum_{n=1}^{N_0} \phi_n(0) \hat{w}_{n,e} + \alpha_0 u_e
= \sum_{n \geq 1} \phi_n(0) w_{n,e}
= y_e$.
Hence, for an arbitrarily given constant reference signal $r(t) = r_e \in\R$, the equilibirum condition of the closed-loop system is unique, fully characterized by $r_e$, and is such that $y_e = r_e$.

We can now introduce the dynamics of deviation of the different quantities w.r.t. the equilibrium condition characterized by $r_e \in\R$. In particular:
\begin{subequations}\label{eq: dirichlet boundary measurement - dynamics of deviations}
\begin{align}
& \Delta w(t,x) = \Delta z(t,x) - x^2 \Delta u(t) , \label{eq: dirichlet boundary measurement - dynamics of deviations - 1} \\
& \Delta \dot{X}(t) = F \Delta X(t) + \mathcal{L} \Delta \zeta(t) - \mathcal{L}_r \Delta r(t) , \label{eq: dirichlet boundary measurement - dynamics of deviations - 2} \\
& \Delta\zeta(t) = \sum_{n \geq N+1} \phi_n(0) \Delta w_n(t) , \label{eq: dirichlet boundary measurement - dynamics of deviations - 3} \\
& \Delta \dot{w}_n(t) = (-\lambda_n + q_c) \Delta w_n(t) + a_n \Delta u(t) + b_n \Delta v(t) , \label{eq: dirichlet boundary measurement - dynamics of deviations - 4} \\
& \Delta v(t) = K \Delta \hat{W}_a^{N_0}(t) , \label{eq: dirichlet boundary measurement - dynamics of deviations - 5} \\
& \Delta\tilde{y}(t) = \Delta y(t) = \sum_{n \geq 1} \phi_n(0) \Delta w_n(t)  \label{eq: dirichlet boundary measurement - dynamics of deviations - 6}
\end{align}
\end{subequations}
with $\Delta w_n(t) = \left< \Delta w(t,\cdot) , e_n \right>$.

\subsection{Stability analysis and regulation assessment}

We define the constant $M_{1,\phi} = \sum_{n \geq N+1} \frac{\phi_n(0)^2}{\lambda_n}$, which is finite when $p \in \mathcal{C}^2([0,1])$ because $\phi_n(0) = O(1)$ as $n \rightarrow + \infty$ and (\ref{eq: estimation lambda_n}) hold.

\begin{theorem}\label{thm: Case of a Dirichlet boundary measurement - stab}
Let $p \in \mathcal{C}^2([0,1])$ with $p > 0$, $q \in \mathcal{C}^0([0,1])$ with $q \geq 0$, and $q_c \in \R$. Consider the reaction-diffusion system described by (\ref{eq: dirichlet boundary measurement - RD system}). Let $N_0 \geq 1$ and $\delta > 0$ be given such that $- \lambda_n + q_c < -\delta < 0$ for all $n \geq N_0 +1$. Let $K \in\R^{1 \times (N_0 +2)}$ and $L \in\R^{N_0}$ be such that $A_1 + B_1 K$ and $A_0 - L C_0$ are Hurwitz with eigenvalues that have a real part strictly less than $-\delta < 0$. For a given $N \geq N_0 +1$, assume that there exist $P \succ 0$, $\alpha>1$, and $\beta,\gamma > 0$ such that
\begin{subequations}\label{eq: thm1 - Theta}
\begin{align}
\Theta_{1} & = \begin{bmatrix} F^\top P + P F + 2 \delta P + \alpha \gamma G & P \mathcal{L} \\ \mathcal{L}^\top P^\top & -\beta \end{bmatrix} \prec 0 , \label{eq: thm1 - Theta_1}  \\
\Theta_2 & = 2\gamma \left\{ - \left( 1 - \frac{1}{\alpha} \right) \lambda_{N+1} + q_c + \delta \right\} + \beta M_{1,\phi} \leq 0 . \label{eq: thm1 - Theta_2}
\end{align}
\end{subequations}
Then, for any $\eta \in [0,1)$, there exists $M > 0$ such that, for any $z_0 \in H^2(0,1)$ and $u(0),\xi(0),\hat{w}_n(0)\in\R$ such that $z_0'(0)=0$ and $z_0(1) = u(0)$, the classical solution of the closed-loop system composed of the plant (\ref{eq: dirichlet boundary measurement - RD system}), the integral actions (\ref{eq: homogeneous RD system - abstract form - auxiliary input v}) and (\ref{eq: dirichlet boundary measurement - integral component}), the observer dynamics (\ref{eq: dirichlet boundary measurement - observer dynamics - 1}), and the state feedback (\ref{eq: v - state feedback}) satisfies
\begin{align}
& \Delta u(t)^2 + \Delta  \xi(t)^2 + \sum_{n=1}^{N} \Delta \hat{w}_n(t)^2 + \Vert \Delta z(t) \Vert_{H^1}^2 \nonumber \\
& \quad\leq M e^{-2 \delta t} \left( \Delta u(0)^2 + \Delta \xi(0)^2 + \sum_{n=1}^{N} \Delta \hat{w}_n(0)^2 + \Vert \Delta z_0 \Vert_{H^1}^2 \right) \nonumber \\
& \phantom{\quad\leq}\; + M \sup_{\tau\in[0,t]} e^{-2\eta\delta(t-\tau)} \Delta r(\tau) ^2 \label{eq: dirichlet boundary measurement - stab result}
\end{align}
for all $t \geq 0$. Moreover, the above constraints are always feasible for $N$ large enough.
\end{theorem}

\begin{proof}
Let $P \succ 0$ and $\gamma > 0$ and consider the Lyapunov function candidate defined by
\begin{equation}\label{eq: Lyap function for H1 stab}
V(\Delta X,\Delta w) = \Delta X^\top P \Delta X + \gamma \sum_{n \geq N+1} \lambda_n \left< \Delta w , \phi_n \right>^2 .
\end{equation}
with $\Delta X\in\R^{2N+2}$ and $\Delta w \in D(\mathcal{A})$. The first term accounts for the dynamics of the truncated model (\ref{eq: dynamics closed-loop system - finite dimensional part}) while the series, which in view of (\ref{eq: inner product Af and f}) is related to the $H^1$ norm of the PDE trajectories, is used to handle the modes $w_n$ for $n \geq N+1$ of the PDE plant. Proceeding exactly as in~\cite{lhachemi2020finite} but taking into account the extra contribution of the reference signal appearing in (\ref{eq: dynamics closed-loop system - finite dimensional part}), we obtain for $t \geq 0$ that
\begin{align}
& \dot{V}(t) + 2 \delta V(t)
\leq \begin{bmatrix} \Delta X(t) \\ \Delta \zeta(t) \end{bmatrix}^\top \Theta_{1} \begin{bmatrix} \Delta X(t) \\ \Delta \zeta(t) \end{bmatrix} \nonumber \\
& - 2 \Delta X(t)^\top P \mathcal{L}_r \Delta r(t)
+ \sum_{n \geq N+1} \lambda_n \Gamma_n \Delta w_n(t)^2 \label{eq: Lyap time derivative}
\end{align}
with $\Gamma_n = 2\gamma \left\{ - \left( 1 - \frac{1}{\alpha} \right) \lambda_n + q_c + \delta \right\} + \beta M_{1,\phi}$ for $n \geq N+1$, $\alpha>1$ and $\beta>0$ arbitrary, and where, with a slight abuse of notation, $\dot{V}(t)$ denotes the time derivative of $V(X(t),w(t))$ along the system trajectories (\ref{eq: dirichlet boundary measurement - dynamics of deviations}). Since $\alpha > 1$ we have $\Gamma_n \leq \Theta_2 \leq 0$ for all $n \geq N+1$. From (\ref{eq: thm1 - Theta_1}), there exists $\epsilon > 0 $ such that $\Theta_{1} \preceq - \epsilon I$. Hence the assumptions imply that $\dot{V}(t) + 2 \delta V(t) \leq - \epsilon \Vert \Delta X(t) \Vert^2 - 2 \Delta X(t)^\top P \mathcal{L}_r \Delta r(t) \leq \frac{\Vert P\mathcal{L}_r \Vert^2}{\epsilon} \Delta r(t)^2$ where Young's inequality has been used to derive the latter estimate. After integration, we obtain for any $\eta \in [0,1)$ the existence of a constant $M_1 > 0$ such that $V(t) \leq e^{-2\delta t} V(0) + M_1 \sup_{\tau\in[0,t]} e^{-2\eta\delta(t-\tau)} \Delta r(\tau)^2$ for all $t \geq 0$. The claimed estimate (\ref{eq: dirichlet boundary measurement - stab result}) easily follows from the definition (\ref{eq: Lyap function for H1 stab}) of the Lyapunov function, the use of (\ref{eq: inner product Af and f}), Poincar{\'e}'s inequality, and the change of variable (\ref{eq: dirichlet boundary measurement - dynamics of deviations - 1}).

We now show that we can always select $N \geq N_0 + 1$, $P \succ 0$, $\alpha > 1$, and $\beta,\gamma > 0$ such that (\ref{eq: thm1 - Theta}) holds. By the Schur complement, $\Theta_{1} \prec 0$ is equivalent to  $F^\top P + P F + 2 \delta P + \alpha\gamma G + \frac{1}{\beta} P \mathcal{L} \mathcal{L}^\top P^\top \prec 0$. We define $F = F_1 + F_2$ where
\begin{subequations}\label{eq: matrices F1 and F2}
\begin{equation}
F_1 = \begin{bmatrix}
A_1 + B_1 K & \tilde{L} C_0 & 0 & \tilde{L} C_1 \\
0 & A_0 - L C_0 & 0 & -L C_1 \\
B_{2,b} K + \begin{bmatrix} B_{2,a} & 0 & 0 \end{bmatrix} & 0 & A_2 & 0 \\
0 & 0 & 0 & A_2
\end{bmatrix} ,
\end{equation}
\begin{equation}
F_2 = \begin{bmatrix}
\alpha_1 \tilde{L} \begin{bmatrix} 1 & 0 & 0 \end{bmatrix} & 0 & 0 & 0 \\
-\alpha_1 L \begin{bmatrix} 1 & 0 & 0 \end{bmatrix} & 0 & 0 & 0 \\
0 & 0 & 0 & 0 \\
0 & 0 & 0 & 0
\end{bmatrix}
\end{equation}
\end{subequations}
with $\Vert F_2 \Vert \rightarrow 0$, because $\alpha_1 \rightarrow 0$, when $N \rightarrow + \infty$. We note that $A_1 + B_1 K + \delta I$ and $A_0 - L C_0 + \delta I$ are Hurwitz while $\Vert e^{(A_2+\delta I) t} \Vert \leq e^{-\kappa_0 t}$ with $\kappa_0 = \lambda_{N_0+1} - q_c -\delta > 0$. Moreover, $\Vert \tilde{L} C_1 \Vert \leq \Vert L \Vert \Vert C_1 \Vert$, $\Vert L C_1 \Vert \leq  \Vert L \Vert \Vert C_1 \Vert$, with $\Vert C_1 \Vert = O(1)$ as $N \rightarrow + \infty$ while $\Vert B_{2,b} K + \begin{bmatrix} B_{2,a} & 0 & 0 \end{bmatrix} \Vert \leq \Vert b \Vert_{L^2} \Vert K \Vert + \Vert a \Vert_{L^2}$ where the right-hand side is a constant independent of $N$. The application of~\cite[Lemma in Appendix]{lhachemi2020finite}, which is a generalization of a result found in~\cite{katz2020constructive}, to the matrix $F_1 + \delta I$ gives the existence of $P \succ 0$ such that $F_1^\top P + P F_1 + 2 \delta P = -I$ and $\Vert P \Vert = O(1)$ as $N \rightarrow + \infty$. Therefore, we have $F^\top P + P F + 2 \delta P + \alpha\gamma G + \frac{1}{\beta} P \mathcal{L} \mathcal{L}^\top P^\top = -I + F_2^\top P + P F_2 + \alpha\gamma G + \frac{1}{\beta} P \mathcal{L} \mathcal{L}^\top P^\top$ where $G$ satisfies (\ref{eq: matrix G}) and $\Vert \mathcal{L} \Vert = \sqrt{2} \Vert L \Vert$, which is independent of $N$. Hence, fixing arbitrarily $\alpha > 1$ while setting $\beta = \sqrt{N}$ and $\gamma = N^{-1}$, we infer that (\ref{eq: thm1 - Theta}) holds for $N \geq N_0 + 1$ large enough.
\end{proof}

\begin{remark}
Constraints (\ref{eq: thm1 - Theta}) are nonlinear w.r.t. the decision variables. However, for a given value of $N \geq N_0 + 1$ and arbitrarily fixing $\alpha > 1$, constraints (\ref{eq: thm1 - Theta}) become LMI conditions w.r.t. the decision variables $P \succ 0$ and $\beta,\gamma > 0$. As shown in the proof of Theorem~\ref{thm: Case of a Dirichlet boundary measurement - stab}, these latter LMI conditions are always feasible provided the order of the observer $N$ is selected large enough. Similar remarks apply to Theorems~\ref{thm: Case of a Neumann boundary measurement - stab} and~\ref{thm: crossed}.
\end{remark}

We now assess the setpoint regulation of the left Dirichlet trace.

\begin{theorem}\label{thm: Case of a Dirichlet boundary measurement - reg}
Under both assumptions and conclusions of Theorem~\ref{thm: Case of a Dirichlet boundary measurement - stab}, for any $\eta \in [0,1)$, there exists $M_r > 0$ such that
\begin{align}
& \vert y(t) - r(t) \vert \leq M_r e^{-\delta t} \bigg( \vert \Delta u(0) \vert + \vert \Delta \xi(0) \vert + \sum_{n=1}^{N} \vert \Delta \hat{w}_n(0) \vert \nonumber \\
& \qquad\qquad + \Vert \Delta z_0 \Vert_{H^1} \bigg) + M_r \sup_{\tau\in[0,t]} e^{-\eta\delta(t-\tau)} \vert \Delta r(\tau) \vert \label{eq: dirichlet boundary measurement - reg result}
\end{align}
for all $t \geq 0$.
\end{theorem}

\begin{proof}
Recalling that $y_e = r_e$, one has $\vert y(t) - r(t) \vert \leq \vert \Delta y(t) \vert + \vert \Delta r(t) \vert$. From (\ref{eq: dirichlet boundary measurement - dynamics of deviations - 6}) and Cauchy-Schwarz inequality, we infer that $\vert \Delta y(t) \vert \leq \sqrt{\sum_{n \geq 1} \frac{\phi_n(0)^2}{\lambda_n}} \sqrt{\sum_{n \geq 1} \lambda_n \Delta w_n(t)^2}$. Using now (\ref{eq: inner product Af and f}) we infer the existence of a constant $M_2 > 0$ such that $\vert \Delta y(t) \vert \leq M_2 \Vert \Delta w(t) \Vert_{H^1}$. The proof is completed by invoking the change of variable (\ref{eq: dirichlet boundary measurement - dynamics of deviations - 1}) and the stability result (\ref{eq: dirichlet boundary measurement - stab result}).
\end{proof}

\section{Neumann measurement and regulation control}\label{sec: Neuman}
We now consider the reaction-diffusion system with Neumann boundary observation described for $t > 0$ and $x \in (0,1)$ by
\begin{subequations}\label{eq: neumann boundary measurement - RD system}
\begin{align}
z_t(t,x) & = \left( p(x) z_x(t,x) \right)_x + (q_c - q(x)) z(t,x) \label{eq: neumann boundary measurement - RD system - 1} \\
z(t,0) & = 0 , \quad z(t,1) = u(t) \label{eq: neumann boundary measurement - RD system - 2} \\
z(0,x) & = z_0(x) \\
y(t) & = z_x(t,0)
\end{align}
\end{subequations}
in the case $p \in \mathcal{C}^2([0,1])$.

\subsection{Control design}
Introducing the change of variable
\begin{equation}\label{eq: neumann boundary measurement - change of variable}
w(t,x) = z(t,x) - x u(t)
\end{equation}
we obtain
\begin{subequations}\label{eq: neumann boundary measurement - homogeneous RD system}
\begin{align}
& w_t(t,x) = ( p(x) w_x(t,x) )_x + (q_c - q(x)) w(t,x) \nonumber \\
& \phantom{w_t(t,x) =}\; + a(x) u(t) + b(x) \dot{u}(t) \label{eq: neumann boundary measurement - homogeneous RD system - 1} \\
& w(t,0) = 0 , \quad w(t,1) = 0 \label{eq: neumann boundary measurement - homogeneous RD system - 2} \\
& w(0,x) = w_0(x) \label{eq: neumann boundary measurement - homogeneous RD system - 3} \\
& \tilde{y}(t) = w_x(t,0) \label{eq: neumann boundary measurement - homogeneous RD system - 4}
\end{align}
\end{subequations}
with $a,b \in L^2(0,1)$ defined by $a(x) = p'(x) + (q_c-q(x))x$ and $b(x) = -x$, respectively, $\tilde{y}(t) = y(t) - u(t)$ , and $w_0(x) = z_0(x) - x u(0)$. Introducing the auxiliary command input $v(t) = \dot{u}(t)$, we infer that (\ref{eq: homogeneous RD system - abstract form}) still holds but the domain of $\mathcal{A}$ is now replaced by $D(\mathcal{A}) = \{ f \in H^2(0,1) \,:\, f(0)=f(1)=0 \}$. Then, considering classical solutions associated with any $z_0 \in H^2(0,1)$ and any $u(0) \in \R$ such that $z_0(0)=0$ and $z_0(1) = u(0)$ (their existence for the upcoming closed-loop dynamics is an immediate consequence of \cite[Chap.~6, Thm.~1.7]{pazy2012semigroups}), (\ref{eq: homogeneous RD system - spectral reduction - 1}-\ref{eq: homogeneous RD system - spectral reduction - 2}) is still valid while (\ref{eq: homogeneous RD system - spectral reduction - 3}) is replaced by
\begin{equation}\label{eq: neumann boundary measurement - measurement bis}
\tilde{y}(t) = \sum_{i \geq 1} \phi_i'(0) w_i(t) .
\end{equation}
Based on similar motivations than the ones reported in Section~\ref{sec: Dirichlet measurement and regulation control}, we consider the integral component
\begin{align}\label{eq: neumann boundary measurement - integral component}
\dot{\xi}(t)
& = \sum_{n = 1}^{N_0} \phi_n'(0) \hat{w}_n(t) + \alpha_0 u(t) + \beta_0 v(t) - r(t) .
\end{align}
with
\begin{equation}\label{eq: neumann boundary measurement - def alpha_0 and beta_0}
\alpha_0 = 1 - \sum_{n \geq N_0 + 1} \frac{a_n \phi_n'(0)}{-\lambda_n + q_c} , 
\quad \beta_0 = - \sum_{n \geq N_0 + 1} \frac{b_n \phi_n'(0)}{-\lambda_n + q_c}
\end{equation}
and where the observation dynamics, for $1 \leq n \leq N$, take the form:
\begin{align}
\dot{\hat{w}}_n (t)
& = ( -\lambda_n + q_c ) \hat{w}_n(t) + a_n u(t) + b_n v(t) \nonumber \\
& \phantom{=}\; - l_n \left( \sum_{i=1}^N \phi_i'(0) \hat{w}_i(t) - \alpha_1 u(t) - \tilde{y}(t) \right) \label{eq: neumann boundary measurement - observer dynamics - 1}
\end{align}
with
\begin{equation}\label{eq: neumann boundary measurement - def alpha_1}
\alpha_1 = \sum_{n \geq N +1} \dfrac{a_n \phi_n'(0)}{-\lambda_n + q_c}
\end{equation}
and where $l_n \in\R$ are the observer gains. We set $l_n = 0$ for $N_0+1 \leq n \leq N$. Proceeding now as in Section~\ref{sec: Dirichlet measurement and regulation control} but with the updated versions of the matrices $C_0$ and $C_1$ now given by $C_0 = \begin{bmatrix} \phi_1'(0) & \ldots & \phi_{N_0}'(0) \end{bmatrix}$ and $C_1 = \begin{bmatrix} \dfrac{\phi_{N_0 +1}'(0)}{\lambda_{N_0 + 1}} & \ldots & \dfrac{\phi_{N}'(0)}{\lambda_{N}} \end{bmatrix}$ while redefining $\tilde{e}_n(t)$ and $\zeta(t)$ as $\tilde{e}_n(t) = \lambda_n e_n(t)$ and $\zeta(t) = \sum_{n \geq N+1} \phi_n'(0) w_n(t)$, we infer that (\ref{eq: dynamics closed-loop system - finite dimensional part}) holds.

\begin{lemma}
$(A_1,B_1)$ is controllable and $(A_0,C_0)$ is observable.
\end{lemma}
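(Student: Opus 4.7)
The plan is to mirror the proof of Lemma~\ref{eq: Dirichlet measurement - Kalman condion} step by step, adapting only what changes when $\phi_n(0)$ is replaced by $\phi_n'(0)$ in $C_0$ and when the Sturm--Liouville domain is switched to $D(\mathcal{A}) = \{f \in H^2(0,1) : f(0)=f(1)=0\}$. By~\cite[Lem.~2]{lhachemi2020pi}, controllability of $(A_1,B_1)$ reduces to: (i) the Kalman condition on the reduced pair (\ref{eq: kalman condition previous work}), which is handled in this Neumann observation setting exactly as in~\cite{lhachemi2020finite}; and (ii) the invertibility of the matrix $T$ now assembled from the updated $\alpha_0,\beta_0$ in (\ref{eq: neumann boundary measurement - def alpha_0 and beta_0}) and from the $C_0$ of this section.

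To establish (ii), I would take any $\mathrm{col}(u_e,w_{1,e},\ldots,w_{N_0,e},v_e) \in \mathrm{ker}(T)$ and extract the Neumann analogues of (\ref{eq: check Kalman condition}), namely $v_e=0$, $a_n u_e + (-\lambda_n+q_c) w_{n,e} = 0$ for $1 \leq n \leq N_0$, and $\alpha_0 u_e + \sum_{n=1}^{N_0} \phi_n'(0) w_{n,e} = 0$. Extending by $w_{n,e} = -a_n u_e/(-\lambda_n+q_c)$ for $n \geq N_0+1$, the sequences $(w_{n,e})_{n\geq 1}$ and $(\lambda_n w_{n,e})_{n\geq 1}$ lie in $l^2(\N)$ because $(a_n) \in l^2(\N)$, so that $w_e \triangleq \sum_{n\geq 1} w_{n,e}\phi_n \in D(\mathcal{A})$ and $-\mathcal{A} w_e + q_c w_e + a u_e = 0$. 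The new domain automatically gives $w_e(0) = w_e(1) = 0$. The critical point is that the ``$+1$'' appearing in the updated $\alpha_0$ of (\ref{eq: neumann boundary measurement - def alpha_0 and beta_0}) transforms the third scalar equation of $\mathrm{ker}(T)$ into $u_e + \sum_{n\geq 1} \phi_n'(0) w_{n,e} = 0$, i.e., $w_e'(0) = -u_e$, where termwise differentiation at $x=0$ is justified by the series representation recalled in Section~\ref{sec: preliminaries}.

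I would then apply the change of variable $z_e(x) = w_e(x) + x u_e$ inherited from (\ref{eq: neumann boundary measurement - change of variable}). A direct computation yields $(p z_e')' + (q_c-q) z_e = 0$, together with $z_e(0) = 0$, $z_e'(0) = w_e'(0) + u_e = 0$, and $z_e(1) = u_e$. Cauchy uniqueness at $x=0$ then forces $z_e \equiv 0$, whence $u_e = z_e(1) = 0$ and $w_{n,e} = 0$ for all $n \geq 1$, establishing $\mathrm{ker}(T)=\{0\}$. Observability of $(A_0,C_0)$ follows since $A_0$ is diagonal with simple eigenvalues and $\phi_n'(0) \neq 0$ for all $n \geq 1$: otherwise $\phi_n'(0) = \phi_n(0) = 0$ would, via Cauchy uniqueness applied to the eigenvalue ODE at $x=0$, give $\phi_n \equiv 0$, contradicting $\Vert \phi_n \Vert_{L^2} = 1$.

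The main obstacle is purely the bookkeeping around the extra ``$+1$'' built into the updated $\alpha_0$: it is exactly this term that supplies the correct Cauchy datum $w_e'(0) = -u_e$ at $x=0$, so that the Neumann-case change of variable $z_e = w_e + x u_e$ simultaneously annihilates $z_e(0)$ and $z_e'(0)$ and the Cauchy uniqueness argument closes identically to the Dirichlet case.
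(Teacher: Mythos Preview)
Your proposal is correct and follows exactly the route the paper intends: the authors explicitly state that the proof is analogous to that of Lemma~\ref{eq: Dirichlet measurement - Kalman condion} and omit it, and your adaptation (replacing $\phi_n(0)$ by $\phi_n'(0)$, using the domain $D(\mathcal{A})=\{f\in H^2(0,1):f(0)=f(1)=0\}$, and exploiting the ``$+1$'' in the updated $\alpha_0$ to obtain $w_e'(0)=-u_e$ so that $z_e=w_e+xu_e$ has vanishing Cauchy data at $x=0$) is precisely that analogue. The observability argument via $\phi_n'(0)\neq 0$ by Cauchy uniqueness is likewise the expected one.
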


The proof of this Lemma is analogous to the one of Lemma~\ref{eq: Dirichlet measurement - Kalman condion} and is thus omitted. We select in the sequel $K \in\R^{1 \times (N_0 +2)}$ and $L \in\R^{N_0}$ such that $A_1 + B_1 K$ and $A_0 - L C_0$ are Hurwitz.

\subsection{Equilibirum condition and dynamics of deviations}

Proceeding similarly to Section~\ref{sec: Dirichlet measurement and regulation control}, we can characterize the equilibrium condition of the closed-loop system composed of the reaction-diffusion system (\ref{eq: neumann boundary measurement - RD system}), the auxiliary command input dynamics (\ref{eq: homogeneous RD system - abstract form - auxiliary input v}), the integral action (\ref{eq: neumann boundary measurement - integral component}), the observer dynamics (\ref{eq: neumann boundary measurement - observer dynamics - 1}), and the state-feedback (\ref{eq: v - state feedback}). In particular, setting $r(t) = r_e \in\R$, it can be shown that there exists a unique solution to:
\begin{subequations}\label{eq: neumann boundary measurement - equilibirum condition}
\begin{align}
0 & = (-\lambda_n+q_c) w_{n,e} + a_n u_e + b_n v_e = 0 , \quad n \geq 1 , \label{eq: neumann boundary measurement - equilibirum condition - 1} \\
0 & = v_e = K \hat{W}^{N_0}_{a,e} , \label{eq: neumann boundary measurement - equilibirum condition - 2} \\
0 & = \sum_{n=1}^{N_0} \phi_n'(0) \hat{w}_{n,e} + \alpha_0 u_e + \beta_0 v_e - r_e , \label{eq: neumann boundary measurement - equilibirum condition - 3} \\
0 & = (-\lambda_n+q_c) \hat{w}_{n,e} + a_n u_e + b_n v_e \nonumber \\
& \phantom{=}\; - l_n \left\{ \sum_{i=1}^{N} \phi_i'(0) \hat{w}_{i,e} - \alpha_1 u_e - \tilde{y}_e \right\} , \quad 1 \leq n \leq N_0 , \label{eq: neumann boundary measurement - equilibirum condition - 4} \\
0 & = (-\lambda_n+q_c) \hat{w}_{n,e} + a_n u_e + b_n v_e , \quad N_0 + 1 \leq n \leq N , \label{eq: neumann boundary measurement - equilibirum condition - 5} \\
\tilde{y}_e & = \sum_{n \geq 1} \phi_n'(0) w_{n,e} . \label{eq: neumann boundary measurement - equilibirum condition - 6}
\end{align}
\end{subequations}
Moreover we can define $w_e \triangleq \sum_{n \geq 1} w_{n,e} \phi_n \in D(\mathcal{A})$. Introducing the change of variable $z_e = w_e + x u_e$, $z_e$ is a static solution of (\ref{eq: neumann boundary measurement - RD system - 1}-\ref{eq: neumann boundary measurement - RD system - 2}) associated with the constant control input $u(t) = u_e$. Denoting by $y_e \triangleq z_e'(0)$, we also infer that $y_e = r_e$, achieving the desired reference tracking. This allows the introduction of the dynamics of deviation of the different quantities w.r.t. the equilibrium condition characterized by $r_e \in\R$. We have:
\begin{subequations}\label{eq: neumann boundary measurement - dynamics of deviations}
\begin{align}
& \Delta w(t,x) = \Delta z(t,x) - x \Delta u(t) , \label{eq: neumann boundary measurement - dynamics of deviations - 1} \\
& \Delta \dot{X}(t) = F \Delta X(t) + \mathcal{L} \Delta \zeta(t) - \mathcal{L}_r \Delta r(t) , \label{eq: neumann boundary measurement - dynamics of deviations - 2} \\
& \Delta\zeta(t) = \sum_{n \geq N+1} \phi_n'(0) \Delta w_n(t) , \label{eq: neumann boundary measurement - dynamics of deviations - 3} \\
& \Delta \dot{w}_n(t) = (-\lambda_n + q_c) \Delta w_n(t) + a_n \Delta u(t) + b_n \Delta v(t) , \label{eq: neumann boundary measurement - dynamics of deviations - 4} \\
& \Delta v(t) = K \Delta \hat{W}_a^{N_0}(t) , \label{eq: neumann boundary measurement - dynamics of deviations - 5} \\
& \Delta\tilde{y}(t) = \Delta y(t) - \Delta u(t) = \sum_{n \geq 1} \phi_n'(0) \Delta w_n(t) . \label{eq: neumann boundary measurement - dynamics of deviations - 6}
\end{align}
\end{subequations}

\subsection{Stability analysis and regulation assessment}

We define, for any $\epsilon \in (0,1/2]$, the constant $M_{2,\phi}(\epsilon) = \sum_{n \geq N+1} \frac{\phi_n'(0)^2}{\lambda_n^{3/2+\epsilon}}$, which is finite when $p \in \mathcal{C}^2([0,1])$ because we recall that $\phi_n'(0) = O(\sqrt{\lambda_n})$ as $n \rightarrow + \infty$ and (\ref{eq: estimation lambda_n}) hold.

\begin{theorem}\label{thm: Case of a Neumann boundary measurement - stab}
Let $p \in \mathcal{C}^2([0,1])$ with $p > 0$, $q \in \mathcal{C}^0([0,1])$ with $q \geq 0$, and $q_c \in \R$. Consider the reaction-diffusion system described by (\ref{eq: neumann boundary measurement - RD system}). Let $N_0 \geq 1$ and $\delta > 0$ be given such that $- \lambda_n + q_c < -\delta < 0$ for all $n \geq N_0 +1$. Let $K \in\R^{1 \times (N_0 +2)}$ and $L \in\R^{N_0}$ be such that $A_1 + B_1 K$ and $A_0 - L C_0$ are Hurwitz with eigenvalues that have a real part strictly less than $-\delta < 0$. For a given $N \geq N_0 +1$, assume that there exist $P \succ 0$, $\epsilon \in (0,1/2]$, $\alpha > 1$, and $\beta,\gamma > 0$ such that $\Theta_{1} \prec 0$, where $\Theta_{1}$ is defined by (\ref{eq: thm1 - Theta_1}),
\begin{align*}
\Theta_2 & = 2\gamma \left\{ - \left( 1  - \frac{1}{\alpha} \right) \lambda_{N+1} + q_c + \delta \right\} + \beta M_{2,\phi}(\epsilon) \lambda_{N+1}^{1/2+\epsilon} \leq 0 , \\
\Theta_3 & = 2\gamma\left( 1  - \frac{1}{\alpha} \right) - \frac{\beta M_{2,\phi}(\epsilon)}{\lambda_{N+1}^{1/2-\epsilon}} \geq 0 .
\end{align*}
Then, for any $\eta \in [0,1)$, there exists $M > 0$ such that, for any $z_0 \in H^2(0,1)$ and $u(0),\xi(0),\hat{w}_n(0)\in\R$ such that $z_0(0)=0$ and $z_0(1) = u(0)$, the classical solution of the closed-loop system composed of the plant (\ref{eq: neumann boundary measurement - RD system}), the integral actions (\ref{eq: homogeneous RD system - abstract form - auxiliary input v}) and (\ref{eq: neumann boundary measurement - integral component}), the observer dynamics (\ref{eq: neumann boundary measurement - observer dynamics - 1}), and the state feedback (\ref{eq: v - state feedback}) satisfies  (\ref{eq: dirichlet boundary measurement - stab result}) for all $t \geq 0$. Moreover, the above constraints are always feasible for $N$ large enough.
\end{theorem}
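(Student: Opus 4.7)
The plan is to mirror the proof of Theorem~\ref{thm: Case of a Dirichlet boundary measurement - stab} using the same Lyapunov function candidate
\begin{equation*}
V(\Delta X, \Delta w) = \Delta X^\top P \Delta X + \gamma \sum_{n \geq N+1} \lambda_n \left< \Delta w , \phi_n \right>^2 ,
\end{equation*}
the only essential change being the handling of the tail $\Delta\zeta(t) = \sum_{n \geq N+1} \phi_n'(0) \Delta w_n(t)$, whose coefficients now satisfy the rougher estimate $\phi_n'(0) = O(\sqrt{\lambda_n})$ instead of $\phi_n(0) = O(1)$.

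The first step is to differentiate $V$ along the deviation dynamics (\ref{eq: neumann boundary measurement - dynamics of deviations}). The finite-dimensional contribution gives $\Delta X^\top (F^\top P + PF)\Delta X + 2\Delta X^\top P\mathcal{L}\Delta\zeta - 2\Delta X^\top P\mathcal{L}_r \Delta r$, while the infinite-dimensional tail produces $2\gamma\sum_{n\geq N+1}\lambda_n(-\lambda_n+q_c)\Delta w_n^2$ plus cross terms in $a_n\Delta u$ and $b_n\Delta v$ that Young's inequality with parameter $\alpha$ absorbs into $\alpha\gamma\Delta X^\top G\Delta X + (\gamma/\alpha)\sum_{n\geq N+1}\lambda_n^2 \Delta w_n^2$ via (\ref{eq: matrix G}). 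The genuinely new step is the estimate of $\Delta\zeta$: by Cauchy--Schwarz,
\begin{equation*}
\Delta\zeta(t)^2 = \Bigl( \sum_{n\geq N+1} \tfrac{\phi_n'(0)}{\lambda_n^{3/4+\epsilon/2}}\, \lambda_n^{3/4+\epsilon/2} \Delta w_n(t) \Bigr)^2 \leq M_{2,\phi}(\epsilon) \sum_{n\geq N+1} \lambda_n^{3/2+\epsilon}\Delta w_n(t)^2 ,
\end{equation*}
which, combined with $2\Delta X^\top P\mathcal{L}\Delta\zeta \leq \beta^{-1}\Delta X^\top P\mathcal{L}\mathcal{L}^\top P^\top \Delta X + \beta\Delta\zeta^2$, yields after grouping a bound of the form
\begin{equation*}
\dot V + 2\delta V \leq \begin{bmatrix}\Delta X\\\Delta\zeta\end{bmatrix}^\top \Theta \begin{bmatrix}\Delta X\\\Delta\zeta\end{bmatrix} + 2\gamma\sum_{n\geq N+1}\lambda_n \Gamma_n \Delta w_n^2 - 2\Delta X^\top P\mathcal{L}_r\Delta r ,
\end{equation*}
in which the extra factor $\lambda_n^{1/2+\epsilon}$ in $\Gamma_n$ is precisely the price paid for $\phi_n'(0) = O(\sqrt{\lambda_n})$. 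Under the assumptions $\Theta\prec 0$ and $\Gamma_n\leq 0$, the two quadratic tails are non-positive; applying Young's inequality to the $\Delta r$ term and integrating, the stability estimate (\ref{eq: dirichlet boundary measurement - stab result}) then follows via (\ref{eq: inner product Af and f}), Poincar\'e's inequality, and the change of variable (\ref{eq: neumann boundary measurement - dynamics of deviations - 1}).

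For the feasibility of the constraints when $N$ is large, the argument of Theorem~\ref{thm: Case of a Dirichlet boundary measurement - stab} transfers with only minor adjustments. One splits $F = F_1 + F_2$ with $\|F_2\|\to 0$ since $\alpha_1\to 0$ as $N\to+\infty$, and applies the Appendix lemma to $F_1 + \delta I$ to obtain $P\succ 0$ satisfying $F_1^\top P + PF_1 + 2\delta P = -I$ with $\|P\| = O(1)$; one verifies that $\|C_1\|^2 = O\bigl(\sum_{n\geq N_0+1} \lambda_n^{-1}\bigr) = O(1)$ because the new $C_1$ has entries $\phi_n'(0)/\lambda_n = O(\lambda_n^{-1/2})$, so $\|\mathcal{L}\|$ stays bounded in $N$. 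The main obstacle compared with the Dirichlet case is the more delicate asymptotic tuning of $(\alpha,\beta,\gamma)$: one needs $\alpha\gamma\to 0$ and $\beta\to+\infty$ to secure $\Theta\prec 0$, but also $\beta/\gamma \lesssim \lambda_{N+1}^{1/2-\epsilon} \sim N^{1-2\epsilon}$ to secure $\Gamma_n\leq 0$ for $n\geq N+1$. Choosing $\epsilon>0$ sufficiently small, these three requirements are simultaneously satisfied by, e.g., $\alpha = N^{1/4}$, $\beta = N^{1/4}$, $\gamma = N^{-1/2}$ for $N$ large enough, which completes the feasibility argument.
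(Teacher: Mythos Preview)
Your proof is correct and follows essentially the same approach as the paper: the same Lyapunov function, the same Cauchy--Schwarz splitting of $\Delta\zeta^2$ via $M_{2,\phi}(\epsilon)$ to produce the extra $\lambda_n^{1/2+\epsilon}$ factor in $\Gamma_n$, and the same Appendix lemma for feasibility once $\Vert C_1 \Vert = O(1)$ is checked. The only differences are cosmetic: your mention of Young's inequality on the cross term $2\Delta X^\top P\mathcal{L}\,\Delta\zeta$ is unnecessary since $\Theta$ retains $\Delta\zeta$ explicitly (one merely adds and subtracts $\beta\Delta\zeta^2$ to get the stated block form), and your feasibility parameters $(\epsilon\text{ small},\ \alpha=\beta=N^{1/4},\ \gamma=N^{-1/2})$ differ from the paper's choice $(\epsilon=1/8,\ \alpha=\beta=N^{1/8},\ \gamma=N^{-3/16})$, but both tunings satisfy the required asymptotics.
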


\begin{proof}
Let $P \succ 0$ and $\gamma > 0$ and consider the Lyapunov function candidate defined by (\ref{eq: Lyap function for H1 stab}). Then, proceeding as in~\cite{lhachemi2020finite} but taking into account the extra contribution of the reference signal appearing in (\ref{eq: dynamics closed-loop system - finite dimensional part}), we obtain that (\ref{eq: Lyap time derivative}) holds for all $t \geq 0$ with $\Gamma_n = 2\gamma \left\{ - \left( 1  - \frac{1}{\alpha} \right) \lambda_n + q_c + \delta \right\} + \beta M_{2,\phi}(\epsilon) \lambda_n^{1/2+\epsilon}$. Since $\epsilon \in (0,1/2]$, we have $\lambda_n^{1/2+\epsilon} = \lambda_n/\lambda_n^{1/2-\epsilon} \leq \lambda_n/\lambda_{N+1}^{1/2-\epsilon}$ for all $n \geq N+1$, hence $\Gamma_n \leq - \Theta_3 \lambda_n + 2\gamma \{ q_c + \delta \} \leq \Theta_2 \leq 0$ for all $n \geq N+1$ where we have used that $\Theta_3 \geq 0$. Now the proof of the stability estimate (\ref{eq: dirichlet boundary measurement - stab result}) is analogous to the one reported in the proof of Theorem~\ref{thm: Case of a Dirichlet boundary measurement - stab}.

It remains to show that we can always select $N \geq N_0 + 1$, $P \succ 0$, $\epsilon \in (0,1/2]$, $\alpha > 1$, and $\beta,\gamma > 0$ such that $\Theta_1 \prec 0$, $\Theta_2 \leq 0$, and $\Theta_3 \geq 0$. To handle the constraint $\Theta_{1} \prec 0$, we proceed as in the last part of the proof of Theorem~\ref{thm: Case of a Dirichlet boundary measurement - stab}. This is allowed because $\Vert C_1 \Vert = O(1)$ as $N \rightarrow + \infty$. We set $\epsilon = 1/8$ and we arbitrary fix $\alpha > 1$. Setting $\beta = N^{1/8}$ and $\gamma = N^{-3/16}$, we deduce the existence of an integer $N \geq N_0 + 1$ large enough such that $\Theta_1 \prec 0$, $\Theta_2 \leq 0$, and $\Theta_3 \geq 0$.
\end{proof}

We are now in position to assess the setpoint regulation control of the left Dirichlet trace.

\begin{theorem}\label{thm: Case of a Neumann boundary measurement - reg}
Under both assumptions and conclusions of Theorem~\ref{thm: Case of a Neumann boundary measurement - stab}, for any $\eta \in [0,1)$, there exists $M_r > 0$ such that
\begin{align}
& \vert y(t) - r(t) \vert 
\leq M_r e^{-\delta t} \bigg( \vert \Delta u(0) \vert + \vert \Delta \xi(0) \vert + \sum_{n=1}^{N} \vert \Delta \hat{w}_n(0) \vert \nonumber \\
& + \Vert \Delta z_0 \Vert_{H^1} + \Vert \mathcal{A} \Delta w_0 \Vert_{L^2}  \bigg) + M_r \sup_{\tau\in[0,t]} e^{-\eta\delta(t-\tau)} \vert \Delta r(\tau) \vert \label{eq: neumann boundary measurement - reg result}
\end{align}
for all $t \geq 0$ where $\Delta w_0 = \Delta z_0 - x \Delta u(0)$.
\end{theorem}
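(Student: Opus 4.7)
The plan is to imitate the proof of Theorem~\ref{thm: Case of a Dirichlet boundary measurement - reg}, but since $\phi_n'(0) = O(\sqrt{\lambda_n})$ is now unbounded, a Cauchy-Schwarz with the $H^1$ weight $\sqrt{\lambda_n}$ diverges, and a strictly-stronger-than-$H^1$ regularity of $\Delta w$ has to be exploited. This is precisely why the initial datum $\Vert \mathcal{A} \Delta w_0 \Vert_{L^2}$ appears in the right-hand side of (\ref{eq: neumann boundary measurement - reg result}).

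Since $y_e = r_e$, the change of variable (\ref{eq: neumann boundary measurement - change of variable}) together with (\ref{eq: neumann boundary measurement - dynamics of deviations - 6}) gives
\begin{equation*}
|y(t)-r(t)| \leq |\Delta u(t)| + |\Delta\tilde{y}(t)| + |\Delta r(t)| ,
\end{equation*}
so that $|\Delta u(t)|$ and $|\Delta r(t)|$ are directly absorbed into the right-hand side of (\ref{eq: neumann boundary measurement - reg result}) by Theorem~\ref{thm: Case of a Neumann boundary measurement - stab} and only $\Delta\tilde{y}(t) = \sum_{n \geq 1} \phi_n'(0) \Delta w_n(t)$ has to be bounded. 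Since $M_{2,\phi}(\epsilon)$ is finite for any $\epsilon \in (0, 1/2]$, Cauchy-Schwarz with the weight $\lambda_n^{3/2+\epsilon}$ yields
\begin{equation*}
|\Delta\tilde{y}(t)|^2 \leq M_{2,\phi}(\epsilon) \sum_{n \geq 1} \lambda_n^{3/2+\epsilon} \Delta w_n(t)^2 .
\end{equation*}
The low-mode part ($1 \leq n \leq N$) is a finite sum controlled by $\Vert \Delta X(t) \Vert^2$ after writing $\Delta w_n = \Delta \hat{w}_n + \Delta e_n$, so the issue reduces to bounding the tail $\sum_{n \geq N+1} \lambda_n^{3/2+\epsilon} \Delta w_n(t)^2$ with a decay $e^{-2\delta t}$ matching the square of the announced rate.

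To handle the tail I would use the H{\"o}lder interpolation
\begin{equation*}
\sum_{n \geq N+1} \lambda_n^{3/2+\epsilon} \Delta w_n^2 \leq \left(\sum_{n \geq N+1} \lambda_n \Delta w_n^2\right)^{1/2-\epsilon}\left(\sum_{n \geq N+1} \lambda_n^2 \Delta w_n^2\right)^{1/2+\epsilon} ,
\end{equation*}
where the first factor is the $H^1$-type quantity already shown to decay as $e^{-2\delta t}$ by Theorem~\ref{thm: Case of a Neumann boundary measurement - stab}, while the second factor equals $\Vert \mathcal{A} \Delta w(t,\cdot) \Vert_{L^2}^2$. The main obstacle of the proof is thus to establish for this $D(\mathcal{A})$-norm an analogous exponential decay at rate $2\delta$, starting from $\Vert \mathcal{A} \Delta w_0 \Vert_{L^2}$; without such a $D(\mathcal{A})$-decay the interpolation would only deliver a slower rate than $\delta$. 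I would address it through a companion higher-regularity Lyapunov analysis for the closed loop, with a functional of the form $\Delta X^\top P \Delta X + \gamma \sum_{n \geq N+1} \lambda_n^2 \Delta w_n^2$, in which the boundary-trace contribution coming from $\phi_n'(0)^2$ in the cross-coupling is absorbed into the now-cubic $-\lambda_n^3$ tail dissipation, and in which the exponential decay of $\Delta u(t)$ and $\Delta v(t)$ provided by Theorem~\ref{thm: Case of a Neumann boundary measurement - stab} is used to handle the forcing. The genuine difficulty is that $a, b \notin D(\mathcal{A})$, so analytic-semigroup estimates cannot be invoked off the shelf and the $D(\mathcal{A})$-energy must be propagated by a tailored argument exploiting the $\mathcal{C}^2$-regularity of $p$ and the compatibility conditions on $z_0$. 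Once this step is carried out, plugging both factors back into H{\"o}lder, taking square roots, and transferring from $\Delta w$ to $\Delta z$ via (\ref{eq: neumann boundary measurement - dynamics of deviations - 1}) delivers the announced estimate (\ref{eq: neumann boundary measurement - reg result}).
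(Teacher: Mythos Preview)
Your plan is correct in spirit and lands on the same key quantity as the paper, but it takes a detour that the paper avoids. The paper does not use the weight $\lambda_n^{3/2+\epsilon}$ at all for this theorem: it applies Cauchy--Schwarz directly with the weight $\lambda_n^{2}$, namely
\[
|\Delta \tilde y(t)| \leq \sqrt{\sum_{n\geq 1}\frac{\phi_n'(0)^2}{\lambda_n^{2}}}\;\sqrt{\sum_{n\geq 1}\lambda_n^{2}\,\Delta w_n(t)^2},
\]
which is legitimate because $\phi_n'(0)=O(\sqrt{\lambda_n})$ and $\lambda_n\sim n^2$ make the first factor finite. Thus the whole H{\"o}lder interpolation step in your proposal is superfluous: since you need an exponential estimate on $\sum_{n}\lambda_n^{2}\Delta w_n^2$ anyway as the second interpolation factor, you might as well use that estimate directly and skip the interpolation. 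Both routes therefore reduce to the same remaining task, namely proving that $\sum_{n\geq 1}\lambda_n^{2}\Delta w_n(t)^2=\|\mathcal{A}\Delta w(t,\cdot)\|_{L^2}^2$ decays like $e^{-2\delta t}$ with an overhead governed by $\|\mathcal{A}\Delta w_0\|_{L^2}$ and the quantities already appearing in Theorem~\ref{thm: Case of a Neumann boundary measurement - stab}.

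For that remaining task the paper simply invokes \cite[Proof of Theorem~2]{lhachemi2020pi}, whereas you propose a companion Lyapunov functional $\Delta X^\top P\,\Delta X+\gamma\sum_{n\geq N+1}\lambda_n^2\Delta w_n^2$. Your outline is reasonable, and your caveat that $a,b\notin D(\mathcal{A})$ blocks a direct analytic-semigroup argument is to the point. Note, however, that the coupling you worry about (the $\phi_n'(0)^2$ contribution through $\zeta$) is already handled at the level of $\Delta X$; the only new ingredient in the $\lambda_n^2$ tail is the forcing $a_n\Delta u+b_n\Delta v$, and there the mode-by-mode variation-of-constants formula together with the already-established exponential decay of $\Delta u,\Delta v$ suffices, exactly as in the cited reference. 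So your program works, but it is longer than necessary; the paper's version is the same proof with the interpolation removed and the $D(\mathcal{A})$-energy estimate outsourced.
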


\begin{proof}
Recalling that $y_e = r_e$, one has $\vert y(t) - r(t) \vert \leq \vert \Delta y(t) \vert + \vert \Delta r(t) \vert$. We infer from (\ref{eq: neumann boundary measurement - dynamics of deviations - 6}) and Cauchy-Schwarz inequality that $\vert \Delta y(t) \vert \leq \sqrt{\sum_{n \geq 1}\frac{\phi_n'(0)^2}{\lambda_n^2}}\sqrt{\sum_{n \geq 1} \lambda_n^2 \Delta w_n(t)^2} + \vert \Delta u(t) \vert$. In view of the stability estimate (\ref{eq: dirichlet boundary measurement - stab result}) provided by Theorem~\ref{thm: Case of a Neumann boundary measurement - stab}, we only need to study the term $\sum_{n \geq 1} \lambda_n^2 \Delta w_n(t)^2$. This can be done as in \cite[Proof of Theorem~2]{lhachemi2020pi}, yielding the claimed estimate (\ref{eq: neumann boundary measurement - reg result}).
\end{proof}

\section{Dirichlet measurement and Neumann regulation control}\label{sec: crossed configuration}
We now consider the reaction-diffusion system described by (\ref{eq: dirichlet boundary measurement - RD system - 1}-\ref{eq: dirichlet boundary measurement - RD system - 3}), still in the case $p \in \mathcal{C}^2([0,1])$, but this time with the boundary measurement $y_m(t)$ and the distinct and unmeasured output to-be-regulated $y_r(t)$ described by:
\begin{equation}\label{eq: crossed - RD system - measurements}
y_m(t) = z(t,0) , \quad y_r(t) = z_x(t,1) .
\end{equation}

\subsection{Control design}

Using the change of variable (\ref{eq: change of variable}), we obtain that (\ref{eq: homogeneous RD system - 1}-\ref{eq: homogeneous RD system - 3}) still hold while (\ref{eq: homogeneous RD system - 4}) is replaced by
\begin{subequations}\label{eq: crossed - measurement}
\begin{align}
& \tilde{y}_m(t) = w(t,0) = z(t,0) = y_m(t) , \\
& \tilde{y}_r(t) = w_x(t,1) = z_x(t,1) -2u(t) = y_r(t) -2u(t) .
\end{align}
\end{subequations}
Then, considering classical solutions, (\ref{eq: homogeneous RD system - spectral reduction - 1}-\ref{eq: homogeneous RD system - spectral reduction - 2}) is still valid while (\ref{eq: homogeneous RD system - spectral reduction - 3}) is replaced by
\begin{equation}\label{eq: crossed - measurement bis}
\tilde{y}_m(t) = \sum_{i \geq 1} \phi_i(0) w_i(t) , \quad \tilde{y}_r(t) = \sum_{i \geq 1} \phi_i'(1) w_i(t) .
\end{equation}
Based on similar motivations that the ones reported in Section~\ref{sec: Dirichlet measurement and regulation control}, we consider the integral component
\begin{align}\label{eq: crossed - integral component}
\dot{\xi}(t)
& = \sum_{n = 1}^{N_0} \phi_n'(1) \hat{w}_n(t) + \alpha_0 u(t) + \beta_0 v(t) - r(t) .
\end{align}
with
\begin{equation}\label{eq: crossed - def alpha_0 and beta_0}
\alpha_0 = 2 - \sum_{n \geq N_0 + 1} \frac{a_n \phi_n'(1)}{-\lambda_n + q_c} , \quad
\beta_0 = - \sum_{n \geq N_0 + 1} \frac{b_n \phi_n'(1)}{-\lambda_n + q_c}
\end{equation}
and where the observation dynamics, for $1 \leq n \leq N$, takes the form:
\begin{align}
\dot{\hat{w}}_n (t)
& = ( -\lambda_n + q_c ) \hat{w}_n(t) + a_n u(t) + b_n v(t) \nonumber \\
& \phantom{=}\; - l_n \left( \sum_{i=1}^N \phi_i(0) \hat{w}_i(t) - \alpha_1 u(t) - \tilde{y}_m(t) \right) \label{eq: crossed - observer dynamics - 1}
\end{align}
with
\begin{equation}\label{eq: crossed - def alpha_1}
\alpha_1 = \sum_{n \geq N +1} \dfrac{a_n \phi_n(0)}{-\lambda_n + q_c}
\end{equation}
and where $l_n \in\R$ are the observer gains. We set $l_n = 0$ for $N_0+1 \leq n \leq N$. Adopting now the same definitions as the ones used in Section~\ref{sec: Dirichlet measurement and regulation control} except that the matrix $A_1$, originally defined by (\ref{eq: def matrices A1 B1 and Br}), is now replaced by
$
A_1 = \begin{bmatrix} 0 & 0 & 0 \\ B_{0,a} & A_0 & 0 \\ \alpha_0 & C_r & 0 \end{bmatrix}
$
where $C_r = \begin{bmatrix} \phi_1'(1) & \ldots & \phi_{N_0}'(1) \end{bmatrix}$,  we infer that (\ref{eq: dynamics closed-loop system - finite dimensional part}) holds.

\begin{lemma}\label{lem: crossed - controllability lemma}
The pair $(A_0,C_0)$ is observable. If the unique solution of $(pf')' + (q_c - q) f =0$ with $f(1)=1$ and $f'(1)=0$ is such that $f'(0) \neq 0$, then the pair $(A_1,B_1)$ is controllable.
\end{lemma}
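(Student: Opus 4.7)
The plan is to mirror the two-step pattern of Lemma~\ref{eq: Dirichlet measurement - Kalman condion}, the only genuinely new difficulty being the mismatch between the Dirichlet measurement row $C_0$ and the Neumann integral-action row $C_0^*$, which is precisely what forces the extra hypothesis on $f$. For $(A_0,C_0)$ the argument of Lemma~\ref{eq: Dirichlet measurement - Kalman condion} transfers verbatim since neither $A_0$ nor the measurement output has been modified: $A_0$ is diagonal with simple eigenvalues, and Cauchy uniqueness applied to the Sturm--Liouville eigenvalue problem forces $\phi_n(0) \neq 0$.

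For the controllability of $(A_1,B_1)$ I would again invoke \cite[Lem.~2]{lhachemi2020pi} to reduce the claim to (i)~the Kalman condition for $\left(\begin{bmatrix} 0 & 0 \\ B_{0,a} & A_0 \end{bmatrix}, \begin{bmatrix} 1 \\ B_{0,b} \end{bmatrix}\right)$, already established in~\cite{lhachemi2020finite}, and (ii)~the invertibility of the $(N_0+2) \times (N_0+2)$ matrix
\begin{equation*}
T = \begin{bmatrix} 0 & 0 & 1 \\ B_{0,a} & A_0 & B_{0,b} \\ \alpha_0 & C_0^* & \beta_0 \end{bmatrix}
\end{equation*}
with $\alpha_0,\beta_0$ now taken from (\ref{eq: crossed - def alpha_0 and beta_0}). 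Only (ii) needs new input: the Dirichlet-type boundary condition at $x=0$ inherited from $D(\mathcal{A})$ has to be reconciled with the Neumann-type row evaluated at $x=1$, and this is where the assumption on $f$ must intervene; this is the main obstacle.

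To prove $\ker T = \{0\}$ I would pick $[u_e, w_{1,e}, \ldots, w_{N_0,e}, v_e]^\top$ in the kernel, obtain $v_e = 0$ from the first block-row, extend the vector by $w_{n,e} = -a_n/(-\lambda_n+q_c)\, u_e$ for $n \geq N_0+1$, and form $w_e = \sum_{n \geq 1} w_{n,e} \phi_n$, which belongs to $D(\mathcal{A})$ and satisfies $(p w_e')' + (q_c - q) w_e + a\,u_e = 0$ together with $w_e'(0) = 0$ and $w_e(1) = 0$. Unfolding the third kernel equation with the explicit value of $\alpha_0$ from (\ref{eq: crossed - def alpha_0 and beta_0}) telescopes the tail of the series and yields the additional Neumann-type relation $w_e'(1) = -2 u_e$.

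The final step is the change of variable $z_e(x) = w_e(x) + x^2 u_e$: thanks to $a(x) = 2p(x) + 2xp'(x) + (q_c - q(x))x^2$ the source term cancels and $z_e$ satisfies $(p z_e')' + (q_c - q) z_e = 0$ with Cauchy data $z_e(1) = u_e$, $z_e'(1) = 0$, supplemented by the extra boundary condition $z_e'(0) = 0$. Cauchy uniqueness for this second-order ODE gives $z_e = u_e f$, so that $z_e'(0) = u_e f'(0) = 0$; the hypothesis $f'(0) \neq 0$ then forces $u_e = 0$, hence $z_e = 0$, $w_e = 0$, and $w_{n,e} = 0$ for every $n \geq 1$, concluding the controllability argument.
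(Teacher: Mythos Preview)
Your proposal is correct and follows essentially the same approach as the paper's own proof: the same reduction via \cite[Lem.~2]{lhachemi2020pi}, the same extension of the kernel vector to define $w_e\in D(\mathcal{A})$, the same change of variable $z_e=w_e+x^2u_e$ leading to $(pz_e')'+(q_c-q)z_e=0$ with $z_e(1)=u_e$, $z_e'(1)=0$, $z_e'(0)=0$. Your final step, writing $z_e=u_e f$ by Cauchy uniqueness and then using $f'(0)\neq 0$ to force $u_e=0$, is actually a slightly more explicit rendering of what the paper compresses into ``from our assumption, we infer that $z_e=0$''.
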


\begin{proof}
The observability of $(A_0,C_0)$ was assessed in Lemma~\ref{eq: Dirichlet measurement - Kalman condion}. From~\cite[Lem.~2]{lhachemi2020pi}, and because the pair (\ref{eq: kalman condition previous work}) is controllable, then $(A_1,B_1)$ is controllable if and only if the matrix
$
T = \begin{bmatrix} 0 & 0 & 1 \\ B_{0,a} & A_0 & B_{0,b} \\ \alpha_0 & C_r & \beta_0 \end{bmatrix}
$
is invertible. Let $\begin{bmatrix} u_e & w_{1,e} & \ldots & w_{N_0,e} & v_e \end{bmatrix}^\top \in\mathrm{ker}(T)$. We obtain that $v_e = 0$, $a_n u_e + (-\lambda_n +q_c) w_{n,e} = 0$ for all $1 \leq n \leq N_0$, and $\alpha_0 u_e + \sum_{n=1}^{N_0} \phi_n'(1) w_{n,e} = 0$. Defining for $n \geq N_0 +1$ the quantity $w_{n,e} = -\frac{a_n}{-\lambda_n+q_c}u_e$, we have $(-\lambda_n+q_c)w_{n,e} + a_n u_e = 0$ for all $n \geq 1$. Hence $(w_{n,e})_{n \geq 1} , (\lambda_n w_{n,e})_{n \geq 1} \in l^2(\N)$ ensuring that $w_e \triangleq \sum_{n \geq 1} w_{n,e} \phi_n \in D(\mathcal{A})$ and $\mathcal{A} w_e = \sum_{n \geq 1} \lambda_n w_{n,e} \phi_n$. This shows that $- \mathcal{A} w_e + q_c w_e + a u_e = 0$. Moreover, using (\ref{eq: crossed - def alpha_0 and beta_0}), we also have $0 = \alpha_0 u_e + \sum_{n=1}^{N_0} \phi_n'(1) w_{n,e} = 2 u_e + w_e'(1)$. From the two latter identities, we infer that $(p w_e')' + (q_c - q) w_e + a u_e = 0$, $w_e'(0) = w_e(1) = 0$, and $w_e'(1) + 2 u_e = 0$. Introducing the change of variable $z_e(x) = w_e(x) + x^2 u_e$, we deduce that $(p z_e')' + (q_c - q) z_e = 0$, $z_e'(0) = z_e'(1) = 0$, and $z_e(1) = u_e$. From our assumption, we infer that $u_e = 0$ and $z_e = 0$. Thus we have $w_e = z_e - x^2 u_e = 0$ hence $w_{n,e} = 0$ for all $n \geq 1$. We deduce that $\mathrm{ker}(T)=\{0\}$, showing that $(A_1,B_1)$ is controllable.
\end{proof}

We select $K \in\R^{1 \times (N_0 +2)}$ and $L \in\R^{N_0}$ so that $A_1 + B_1 K$ and $A_0 - L C_0$ are Hurwitz.

\subsection{Equilibrium condition and dynamics of deviations}

Proceeding as in Section~\ref{sec: Dirichlet measurement and regulation control}, we can characterize the equilibrium condition of the closed-loop system composed of the reaction-diffusion system (\ref{eq: dirichlet boundary measurement - RD system - 1}-\ref{eq: dirichlet boundary measurement - RD system - 3}) with (\ref{eq: crossed - RD system - measurements}), the auxiliary command input dynamics (\ref{eq: homogeneous RD system - abstract form - auxiliary input v}), the integral action (\ref{eq: crossed - integral component}), the observer dynamics (\ref{eq: crossed - observer dynamics - 1}), and the state-feedback (\ref{eq: v - state feedback}). In particular, setting $r(t) = r_e \in\R$, it can be shown that there exists a unique solution to :
\begin{subequations}\label{eq: crossed - equilibirum condition}
\begin{align}
0 & = (-\lambda_n+q_c) w_{n,e} + a_n u_e + b_n v_e = 0 , \quad n \geq 1 , \label{eq: crossed - equilibirum condition - 1} \\
0 & = v_e = K \hat{W}^{N_0}_{a,e} , \label{eq: crossed - equilibirum condition - 2} \\
0 & = \sum_{n=1}^{N_0} \phi_n'(1) \hat{w}_{n,e} + \alpha_0 u_e + \beta_0 v_e - r_e , \label{eq: crossed - equilibirum condition - 3} \\
0 & = (-\lambda_n+q_c) \hat{w}_{n,e} + a_n u_e + b_n v_e \nonumber \\
& \phantom{=}\; - l_n \left\{ \sum_{i=1}^{N} \phi_i(0) \hat{w}_{i,e} - \alpha_1 u_e - \tilde{y}_e \right\} , \; 1 \leq n \leq N_0 , \label{eq: crossed - equilibirum condition - 4} \\
0 & = (-\lambda_n+q_c) \hat{w}_{n,e} + a_n u_e + b_n v_e , \quad N_0 + 1 \leq n \leq N , \label{eq: crossed - equilibirum condition - 5} \\
\tilde{y}_{m,e} & = \sum_{n \geq 1} \phi_n(0) w_{n,e} , \quad
\tilde{y}_{r,e} = \sum_{n \geq 1} \phi_n'(1) w_{n,e} . \label{eq: crossed - equilibirum condition - 6}
\end{align}
\end{subequations}
Moreover we can define $w_e \triangleq \sum_{n \geq 1} w_{n,e} \phi_n \in D(\mathcal{A})$. Introducing the change of variable $z_e = w_e + x^2 u_e$, $z_e$ is a static solution of (\ref{eq: dirichlet boundary measurement - RD system - 1}-\ref{eq: dirichlet boundary measurement - RD system - 2}) associated with the constant control input $u(t) = u_e$. Denoting by $y_{r,e} \triangleq z_e'(1)$, we also infer that $y_{r,e} = r_e$, achieving the desired reference tracking. Consequently, we obtain the following dynamics of deviations:
\begin{subequations}\label{eq: crossed - dynamics of deviations}
\begin{align}
\Delta w(t,x) &= \Delta z(t,x) - x^2 \Delta u(t) , \\
\Delta \dot{X}(t) & = F \Delta X(t) + \mathcal{L} \Delta \zeta(t) - \mathcal{L}_r \Delta r(t) , \\
\Delta\zeta(t) & = \sum_{n \geq N+1} \phi_n(0) \Delta w_n(t) , \\
\Delta \dot{w}_n(t) & = (-\lambda_n + q_c) \Delta w_n(t) + a_n \Delta u(t) + b_n \Delta v(t) , \\
\Delta v(t) & = K \Delta \hat{W}_a^{N_0}(t) , \\
\Delta\tilde{y}_m(t) & = \Delta y_m(t) = \sum_{n \geq 1} \phi_n(0) \Delta w_n(t) , \\
\Delta\tilde{y}_r(t) & = \Delta y_r(t) - 2 \Delta u(t) = \sum_{n \geq 1} \phi_n'(1) \Delta w_n(t) .
\end{align}
\end{subequations}

\subsection{Stability analysis and regulation assessment}

The proof of the following theorem directly follows from the proofs reported in the previous sections.

\begin{theorem}\label{thm: crossed}
Under the assumption of Lemma~\ref{lem: crossed - controllability lemma},  the stability result stated by Theorem~\ref{thm: Case of a Dirichlet boundary measurement - stab} also applies to the closed-loop system composed of the plant (\ref{eq: dirichlet boundary measurement - RD system - 1}-\ref{eq: dirichlet boundary measurement - RD system - 3}) with (\ref{eq: crossed - RD system - measurements}), the integral actions (\ref{eq: homogeneous RD system - abstract form - auxiliary input v}) and (\ref{eq: crossed - integral component}), the observer dynamics (\ref{eq: crossed - observer dynamics - 1}), and the state feedback (\ref{eq: v - state feedback}). Moreover, for any $\eta \in [0,1)$, there exists $M_r > 0$ such that
\begin{align}
& \vert y_r(t) - r(t) \vert 
\leq M_r e^{-\delta t} \bigg( \vert \Delta u(0) \vert + \vert \Delta \xi(0) \vert + \sum_{n=1}^{N} \vert \Delta \hat{w}_n(0) \vert \nonumber \\
& + \Vert \Delta z_0 \Vert_{H^1} + \Vert \mathcal{A} \Delta w_0 \Vert_{L^2} \bigg) + M_r \sup_{\tau\in[0,t]} e^{-\eta\delta(t-\tau)} \vert \Delta r(\tau) \vert \label{eq: crossed - reg result}
\end{align}
for all $t \geq 0$ where $\Delta w_0 = \Delta z_0 - x^2 \Delta u(0)$.
\end{theorem}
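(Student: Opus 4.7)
The plan is to exploit the structural similarity between this crossed configuration and the two previously treated cases: the stability argument mirrors that of Theorem~\ref{thm: Case of a Dirichlet boundary measurement - stab} because the observer still relies on the Dirichlet trace, while the regulation estimate mirrors that of Theorem~\ref{thm: Case of a Neumann boundary measurement - reg} because the to-be-regulated output is a Neumann trace. First, for the stability part, I would observe that the closed-loop dynamics are still of the form (\ref{eq: dynamics closed-loop system - finite dimensional part}) and that the truncation residual $\Delta\zeta(t) = \sum_{n \geq N+1} \phi_n(0) \Delta w_n(t)$ is identical in form to that of Section~\ref{sec: Dirichlet measurement and regulation control}. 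Consequently the very same Lyapunov function (\ref{eq: Lyap function for H1 stab}) and the very same matrix $\Theta$ and scalars $\Gamma_n$ (built from the constant $M_{1,\phi}$) drive the analysis. Controllability of $(A_1,B_1)$ and observability of $(A_0,C_0)$ are provided by Lemma~\ref{lem: crossed - controllability lemma}, which justifies the existence of $K$ and $L$ making $A_1 + B_1 K$ and $A_0 - L C_0$ Hurwitz with eigenvalues whose real parts are strictly less than $-\delta$.

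Next, the feasibility claim for $N$ large enough is obtained by the same splitting $F = F_1 + F_2$ as in (\ref{eq: matrices F1 and F2}) with $\|F_2\| \to 0$ as $N \to +\infty$. The only change with respect to Section~\ref{sec: Dirichlet measurement and regulation control} is that the block $C_0$ inside $A_1$ is replaced by $C_0^*$, but $C_0^*$ lives in the fixed-size $N_0$-dimensional block and its norm is independent of $N$; hence the Lemma in the Appendix still delivers $P \succ 0$ with $\|P\| = O(1)$, and the same scaling $\alpha = \beta = \sqrt{N}$, $\gamma = N^{-1}$ enforces $\Theta \prec 0$ and $\Gamma_n \leq 0$ for all $n \geq N+1$. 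The stability estimate (\ref{eq: dirichlet boundary measurement - stab result}) then follows verbatim from the computation in the proof of Theorem~\ref{thm: Case of a Dirichlet boundary measurement - stab}.

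For the regulation part, since $y_{r,e} = r_e$, I would write $|y_r(t) - r(t)| \leq |\Delta y_r(t)| + |\Delta r(t)|$ and use the last line of (\ref{eq: crossed - dynamics of deviations}) to get $\Delta y_r(t) = 2\Delta u(t) + \sum_{n \geq 1} \phi_n'(1) \Delta w_n(t)$. Since $\phi_n'(1) = O(\sqrt{\lambda_n})$, a bare $H^1$ bound on $\Delta w$ no longer suffices: I would instead apply Cauchy--Schwarz in the form
\begin{equation*}
\left| \sum_{n \geq 1} \phi_n'(1) \Delta w_n(t) \right| \leq \sqrt{\sum_{n \geq 1} \frac{\phi_n'(1)^2}{\lambda_n^2}}\, \sqrt{\sum_{n \geq 1} \lambda_n^2 \Delta w_n(t)^2},
\end{equation*}
where the first factor is finite by (\ref{eq: estimation lambda_n}) and the $\phi_n'$ asymptotic, and the second factor equals $\|\mathcal{A}\Delta w(t)\|_{L^2}$.

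The main obstacle is therefore to control $\|\mathcal{A}\Delta w(t)\|_{L^2}$ in terms of the data appearing on the right-hand side of (\ref{eq: crossed - reg result}), and in particular to explain the extra term $\|\mathcal{A}\Delta w_0\|_{L^2}$ in the initial condition. I would handle this exactly as in the proof of Theorem~\ref{thm: Case of a Neumann boundary measurement - reg}, i.e.\ by reusing the $H^2$-type estimate from \cite[Proof of Theorem~2]{lhachemi2020pi}: decompose $\Delta w(t,\cdot)$ into its first $N$ modes, bounded directly by $\Delta X(t)$ through the stability estimate, and its tail, which satisfies a decoupled exponentially stable infinite-dimensional dynamics driven by $\Delta u$ and $\Delta v$ (themselves bounded by $\Delta X$) with initial condition controlled by $\|\mathcal{A}\Delta w_0\|_{L^2}$. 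Combining the two contributions with the change of variable $\Delta w = \Delta z - x^2 \Delta u$ and with the stability bound (\ref{eq: dirichlet boundary measurement - stab result}) then yields (\ref{eq: crossed - reg result}).
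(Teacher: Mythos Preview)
Your proposal is correct and follows precisely the route the paper intends: the stability part is inherited verbatim from Theorem~\ref{thm: Case of a Dirichlet boundary measurement - stab} since the observer residual $\Delta\zeta$ involves $\phi_n(0)$, while the regulation estimate is obtained exactly as in Theorem~\ref{thm: Case of a Neumann boundary measurement - reg} via the Cauchy--Schwarz bound with weights $\lambda_n^2$ and the $H^2$-type control from \cite[Proof of Theorem~2]{lhachemi2020pi}. The paper's own proof is the single sentence ``directly follows from the proofs reported in the previous sections,'' and your write-up is the natural unpacking of that sentence.
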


\section{Numerical illustration}\label{sec: num}
We illustrate the result of Section~\ref{sec: crossed configuration} for Dirichlet measurement and Neumann regulation using a modal approximation that captures the 50 dominant modes of the reaction-diffusion PDE. We set $p=1$, $q=0$, and $q_c=3$ for which the open-loop plant is unstable. Selecting $\delta = 0.5$, we obtain $N_0 = 1$, the feedback gain $K = \begin{bmatrix} -10.4134 &  -11.3747 & 2.3100 \end{bmatrix}$, and the observer gain $L = 1.4373$. The conditions of Theorem~\ref{thm: crossed} are found feasible for $N = 3$. The time-domain evolution of the closed-loop system trajectories are depicted in Fig.~\ref{fig: sim}, confirming the theoretical predictions.

\begin{figure}
     \centering
     	\subfigure[State $z(t,x)$]{
		\includegraphics[width=3.25in]{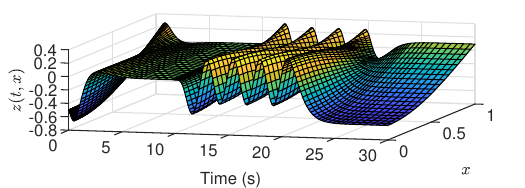}
		}
     	\subfigure[Observation error $e(t,x) = w(t,x) - \sum_{n=1}^{N} \hat{w}_n(t) \phi_n(x)$]{
		\includegraphics[width=3.25in]{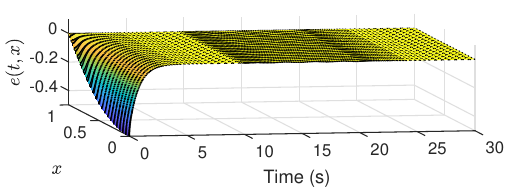}
		}
     	\subfigure[Command input $u(t) = z(t,1)$]{
		\includegraphics[width=3.25in]{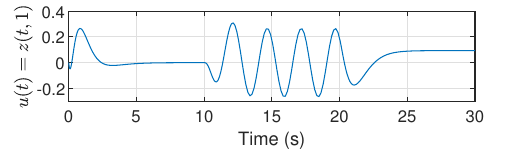}
		}
     	\subfigure[Regulated output $y_r(t) = z_x(t,1)$]{
		\includegraphics[width=3.25in]{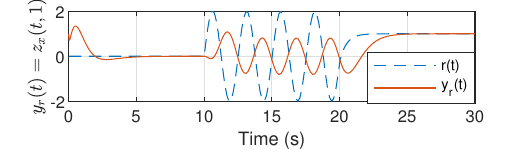}
		}
     \caption{Time evolution in closed-loop with Dirichlet boundary measurement $y_m(t) = z(t,0)$ and Neumann boundary regulation $y_r(t) = z_x(t,1)$ for the reaction-diffusion system (\ref{eq: dirichlet boundary measurement - RD system - 1}-\ref{eq: dirichlet boundary measurement - RD system - 3})}
     \label{fig: sim}
\end{figure}

\section{Conclusion}\label{sec: conclusion}
We proposed the design of a finite-dimensional observer-based PI controller to achieve the both output stabilization and regulation control of reaction-diffusion PDEs. Even if presented for a Dirichlet boundary control input, the presented results easily extend to Neumann/Robin boundary control (by modifying the change of variable formula to obtain an homogeneous PDE, giving different $a,b \in L^2(0,1)$). In-domain measurements can also be handled with the same approach provided the satisfaction of adequate observability conditions. While we have adopted in this paper an early lumping approach, future research directions for finite-dimensional PI regulation control of reaction-diffusion PDEs may be concerned with the study of late lumping approaches~\cite{auriol2019late} in the framework of backstepping control design for PDE-ODE cascades~\cite{tang2011state,wang2019output}.

\ifCLASSOPTIONcaptionsoff
  \newpage
\fi

% trigger a \newpage just before the given reference
% number - used to balance the columns on the last page
% adjust value as needed - may need to be readjusted if
% the document is modified later
%\IEEEtriggeratref{8}
% The "triggered" command can be changed if desired:
%\IEEEtriggercmd{\enlargethispage{-5in}}

% references section

% can use a bibliography generated by BibTeX as a .bbl file
% BibTeX documentation can be easily obtained at:
% http://mirror.ctan.org/biblio/bibtex/contrib/doc/
% The IEEEtran BibTeX style support page is at:
% http://www.michaelshell.org/tex/ieeetran/bibtex/
\bibliographystyle{IEEEtranS}
\nocite{*}
\bibliography{IEEEabrv,mybibfile}

% Generated by IEEEtranS.bst, version: 1.14 (2015/08/26)
\begin{thebibliography}{10}
\providecommand{\url}[1]{#1}
\csname url@samestyle\endcsname
\providecommand{\newblock}{\relax}
\providecommand{\bibinfo}[2]{#2}
\providecommand{\BIBentrySTDinterwordspacing}{\spaceskip=0pt\relax}
\providecommand{\BIBentryALTinterwordstretchfactor}{4}
\providecommand{\BIBentryALTinterwordspacing}{\spaceskip=\fontdimen2\font plus
\BIBentryALTinterwordstretchfactor\fontdimen3\font minus
  \fontdimen4\font\relax}
\providecommand{\BIBforeignlanguage}[2]{{%
\expandafter\ifx\csname l@#1\endcsname\relax
\typeout{** WARNING: IEEEtranS.bst: No hyphenation pattern has been}%
\typeout{** loaded for the language `#1'. Using the pattern for}%
\typeout{** the default language instead.}%
\else
\language=\csname l@#1\endcsname
\fi
#2}}
\providecommand{\BIBdecl}{\relax}
\BIBdecl

\bibitem{auriol2019late}
J.~Auriol, K.~A. Morris, and F.~Di~Meglio, ``Late-lumping backstepping control
  of partial differential equations,'' \emph{Automatica}, vol. 100, pp.
  247--259, 2019.

\bibitem{balas1988finite}
M.~J. Balas, ``Finite-dimensional controllers for linear distributed parameter
  systems: exponential stability using residual mode filters,'' \emph{Journal
  of Mathematical Analysis and Applications}, vol. 133, no.~2, pp. 283--296,
  1988.

\bibitem{barreau2019practical}
M.~Barreau, F.~Gouaisbaut, and A.~Seuret, ``Practical stability analysis of a
  drilling pipe under friction with a {PI}-controller,'' \emph{IEEE
  Transactions on Control Systems Technology}, 2019.

\bibitem{coron2019pi}
J.-M. Coron and A.~Hayat, ``{PI} controllers for {1-D} nonlinear transport
  equation,'' \emph{IEEE Transactions on Automatic Control}, vol.~64, no.~11,
  pp. 4570--4582, 2019.

\bibitem{coron2004global}
J.-M. Coron and E.~Tr{\'e}lat, ``Global steady-state controllability of
  one-dimensional semilinear heat equations,'' \emph{SIAM Journal on Control
  and Optimization}, vol.~43, no.~2, pp. 549--569, 2004.

\bibitem{curtain1982finite}
R.~Curtain, ``Finite-dimensional compensator design for parabolic distributed
  systems with point sensors and boundary input,'' \emph{IEEE Transactions on
  Automatic Control}, vol.~27, no.~1, pp. 98--104, 1982.

\bibitem{curtain2020introduction}
R.~F. Curtain and H.~Zwart, \emph{An introduction to infinite-dimensional
  linear systems theory: A State-Space Approach}.\hskip 1em plus 0.5em minus
  0.4em\relax Springer, 2020, vol.~71.

\bibitem{dos2008boundary}
V.~Dos~Santos, G.~Bastin, J.-M. Coron, and B.~d'Andr{\'e}a Novel, ``Boundary
  control with integral action for hyperbolic systems of conservation laws:
  Stability and experiments,'' \emph{Automatica}, vol.~44, no.~5, pp.
  1310--1318, 2008.

\bibitem{harkort2011finite}
C.~Harkort and J.~Deutscher, ``Finite-dimensional observer-based control of
  linear distributed parameter systems using cascaded output observers,''
  \emph{International journal of control}, vol.~84, no.~1, pp. 107--122, 2011.

\bibitem{hespanha2018linear}
J.~P. Hespanha, \emph{Linear systems theory}.\hskip 1em plus 0.5em minus
  0.4em\relax Princeton university press, 2018.

\bibitem{katz2020constructive}
R.~Katz and E.~Fridman, ``Constructive method for finite-dimensional
  observer-based control of {1-D} parabolic {PDEs},'' \emph{Automatica}, vol.
  122, p. 109285, 2020.

\bibitem{krstic2008boundary}
M.~Krstic and A.~Smyshlyaev, \emph{Boundary control of PDEs: A course on
  backstepping designs}.\hskip 1em plus 0.5em minus 0.4em\relax SIAM, 2008.

\bibitem{lhachemi2020finite}
H.~Lhachemi and C.~Prieur, ``Finite-dimensional observer-based boundary
  stabilization of reaction-diffusion equations with either a {D}irichlet or
  {N}eumann boundary measurement,'' \emph{Automatica}, vol. 135, p. 109955,
  2022.

\bibitem{lhachemi2020pi}
H.~Lhachemi, C.~Prieur, and E.~Tr{\'e}lat, ``{PI} regulation of a
  reaction-diffusion equation with delayed boundary control,'' \emph{IEEE
  Transactions on Automatic Control}, vol.~66, no.~4, pp. 1573--1587, 2020.

\bibitem{lhachemi2021pi}
------, ``{PI} regulation control of a {1-D} semilinear wave equation,''
  \emph{SIAM Journal on Control and Optimization}, 2021, in press.

\bibitem{meurer2012control}
T.~Meurer, \emph{Control of higher--dimensional PDEs: Flatness and backstepping
  designs}.\hskip 1em plus 0.5em minus 0.4em\relax Springer Science \& Business
  Media, 2012.

\bibitem{morris2020}
K.~A. Morris, \emph{Controller Design for Distributed Parameter Systems}.\hskip
  1em plus 0.5em minus 0.4em\relax Springer, 2020.

\bibitem{orlov2017general}
Y.~Orlov, ``On general properties of eigenvalues and eigenfunctions of a
  {Sturm--Liouville} operator: comments on ''{ISS} with respect to boundary
  disturbances for {1-D} parabolic {PDEs}'','' \emph{IEEE Transactions on
  Automatic Control}, vol.~62, no.~11, pp. 5970--5973, 2017.

\bibitem{pazy2012semigroups}
A.~Pazy, \emph{Semigroups of linear operators and applications to partial
  differential equations}.\hskip 1em plus 0.5em minus 0.4em\relax Springer
  Science \& Business Media, 2012, vol.~44.

\bibitem{pohjolainen1982robust}
S.~Pohjolainen, ``Robust multivariable {PI}-controller for infinite dimensional
  systems,'' \emph{IEEE Transactions on Automatic Control}, vol.~27, no.~1, pp.
  17--30, 1982.

\bibitem{russell1978controllability}
D.~L. Russell, ``Controllability and stabilizability theory for linear partial
  differential equations: recent progress and open questions,'' \emph{{SIAM}
  Review}, vol.~20, no.~4, pp. 639--739, 1978.

\bibitem{sakawa1983feedback}
Y.~Sakawa, ``Feedback stabilization of linear diffusion systems,'' \emph{SIAM
  journal on control and optimization}, vol.~21, no.~5, pp. 667--676, 1983.

\bibitem{tang2011state}
S.~Tang and C.~Xie, ``State and output feedback boundary control for a coupled
  {PDE--ODE} system,'' \emph{Systems \& Control Letters}, vol.~60, no.~8, pp.
  540--545, 2011.

\bibitem{terrand2019adding}
A.~Terrand-Jeanne, V.~Andrieu, V.~D.~S. Martins, and C.~Xu, ``Adding integral
  action for open-loop exponentially stable semigroups and application to
  boundary control of {PDE} systems,'' \emph{IEEE Transactions on Automatic
  Control}, vol.~65, no.~11, pp. 4481--4492, 2020.

\bibitem{terrand2018regulation}
A.~Terrand-Jeanne, V.~Andrieu, M.~Tayakout-Fayolle, and V.~D.~S. Martins,
  ``Regulation of inhomogeneous drilling model with a {PI} controller,''
  \emph{IEEE Transactions on Automatic Control}, vol.~65, no.~1, pp. 58--71,
  2020.

\bibitem{trinh2017design}
N.-T. Trinh, V.~Andrieu, and C.-Z. Xu, ``Design of integral controllers for
  nonlinear systems governed by scalar hyperbolic partial differential
  equations,'' \emph{IEEE Transactions on Automatic Control}, vol.~62, no.~9,
  pp. 4527--4536, 2017.

\bibitem{wang2019output}
J.~Wang and M.~Krstic, ``Output feedback boundary control of a heat pde
  sandwiched between two {ODEs},'' \emph{IEEE Transactions on Automatic
  Control}, vol.~64, no.~11, pp. 4653--4660, 2019.

\bibitem{xu1995robust}
C.-Z. Xu and H.~Jerbi, ``A robust {PI}-controller for infinite-dimensional
  systems,'' \emph{International Journal of Control}, vol.~61, no.~1, pp.
  33--45, 1995.

\end{thebibliography}

\end{document}